\definecolor{mygreen}{RGB}{28,172,0} 
\definecolor{mylilas}{RGB}{170,55,241}
\newtheorem{theorem}{Theorem}[section]
\newtheorem{lemma}[theorem]{Lemma}
\newtheorem{proposition}[theorem]{Proposition}
\newtheorem{conjecture}[theorem]{Conjecture}
\theoremstyle{definition}
\newtheorem{definition}[theorem]{Definition}
\theoremstyle{remark}
\newtheorem{remark}[theorem]{Remark}
\numberwithin{equation}{section}
\title[Stochastic stabilization in $\mathbb{C}^2$]{Stabilization by Noise of a $\mathbb{C}^2$-Valued Coupled System}
\author[Chen]{Joe P. Chen}
\address{\textbf{Joe P. Chen} \\ Department of Mathematics, Colgate University, Hamilton, NY 13346, USA.}
\email{jpchen@colgate.edu}
\author[Ford]{Lance Ford}
\address{\textbf{Lance Ford} \\Department of Biostatistics and Epidemiology, University of Oklahoma Health Sciences Center, Oklahoma City, OK 73126, USA.}
\email{lance-ford@ouhsc.edu}
\author[Kielty]{Derek Kielty}
\address{\textbf{Derek Kielty} \\Department of Mathematics, University of Illinois, Urbana, IL 61801, USA.}
\email{dkielty2@illinois.edu}
\author[Majumdar]{Rajeshwari Majumdar}
\address{\textbf{Rajeshwari Majumdar}\\ Department of Mathematics, University of Connecticut, Storrs, CT 06269, USA.}
\email{rajeshwari.majumdar@uconn.edu}
\author[McCain]{Heather McCain}
\address{\textbf{Heather McCain} \\Division of Marine Science, The University of Southern Mississippi, Hattiesburg, MS 39406, USA.}
\email{heather.mccain@usm.edu}
\author[O'Connell]{Dylan O'Connell}
\address{\textbf{Dylan O'Connell} \\Department of Statistics, Yale University, New Haven CT 06511, USA.}
\email{dylan.oconnell@yale.edu}
\author[Shum]{Fan Ny Shum}
\address{\textbf{Fan Ny Shum}\\ Department of Mathematics, Courant Institute of Mathematical Sciences, New York University, New York, NY 10012, USA.}
\email{fanny.shum@nyu.edu}
\thanks{This work is supported by the Research Experience for Undergraduates (REU) program at the Department of Mathematics, University of Connecticut in Summer 2015, under the NSF grant DMS-1262929 (PI: Luke Rogers), and the NSF grant DMS-1405169 (PI: Maria Gordina).}
\date{\today}
\keywords{Stabilization by noise, stochastic differential equations, ergodicity.}
\subjclass[2010]{34F05, 60H10 (primary); 76F20 (secondary)}
\begin{document}

\maketitle
\begin{abstract}
D. Herzog and J. Mattingly have shown that a $\mathbb{C}$-valued polynomial ODE with finite-time blow-up solutions may be stabilized by the addition of $\mathbb{C}$-valued Brownian noise. In this paper, we extend their results to a $\mathbb{C}^2$-valued system of coupled ODEs with finite-time blow-up solutions. We show analytically and numerically that stabilization can be achieved in our setting by adding a suitable Brownian noise, and that the resulting system of SDEs is ergodic. The proof uses the Girsanov theorem to induce a time change from our $\mathbb{C}^2$-system to a quasi-$\mathbb{C}$-system similar to the one studied by Herzog and Mattingly.
\end{abstract}

\tableofcontents 

\section{Introduction} 
\label{sec:intro}
An \textit{explosive} system is a system of differential equations with trajectories that blow up in finite time. Some systems of this type have been shown to be stable by adding a random noise; that is, by adding a small amount of randomness transversal to an explosive trajectory, it pushes the trajectory onto a dynamically stable path. While there are examples where the addition of noise does not guarantee stability (see Scheutzow's construction in \cite{scheutzow}), usually one can stabilize an explosive system by adding a suitable Brownian noise.

This idea of stabilization was influenced by the study of turbulence. Constantin, Lax, and Majda developed a simple 1-dimensional model for the 3-dimensional vorticity equation of incompressible fluid flow in \cite{vorticity}. They proved that the system $\dot{z}=z^2$ for $z\in \mathbb{C}$, a particular example of their vorticity model, blows up in finite time. More references on explosive models can be found in \cite{vorticity}. In \cite{bec, bch, ghw}, the flow of certain fluids can be modeled by an SDE with a polynomial drift term. This inspired Herzog and Mattingly \cite{2011,noise1} to study the stability of the complex-valued SDE
\begin{equation}\label{eq:poly}
dz_t = (a_{n+1}z_t^{n+1} + a_n z_t^n + ... +a_0)\,dt + \sigma\, dB_t
              \end{equation}
with initial condition $z_0\in \mathbb{C}$, where $B_t=B_t^{(1)}+iB_t^{(2)}$, $B_t^{(1)}$ and $B_t^{(2)}$ are independent real-valued standard Brownian motions, and $\sigma \in \mathbb{R^+}$. They showed that the SDE (\ref{eq:poly}) has a solution for all finite times and initial conditions, and its solution possesses a unique invariant measure. In particular, they showed the system (\ref{eq:poly}) is
\textit{ergodic}; roughly speaking, it has the same behavior averaged over time as averaged over the space of all the systemÕs states.

To observe this stabilization phenomenon, consider the SDE
\begin{equation}\label{eq:zn}
		dz_t = z_t^{n+1}\,dt + \sigma \,dB_t.
              \end{equation}      
When $\sigma=0$, equation (\ref{eq:zn}) has $n$ explosive trajectories, each of which lies along the ray ${\rm arg}(z)=2\pi k/n$, $k\in \{0,1,\cdots, n-1\}$. As shown in Figure \ref{fig:zsqphase}, for $n=1$, there is one explosive trajectory, when $z=$ Re$(z)>0$. Intuitively, we need just enough noise to perturb the explosive trajectory onto a stable path. Notice in Figure \ref{fig:zsqtraj}, for the initial condition $z_0=2$, we have an explosive trajectory (indicated in red) when $\sigma=0$ and a nonexplosive trajectory (indicated in blue) when $\sigma\neq 0$. In addition, the blue trajectory follows a similar pattern shown in Figure \ref{fig:zsqphase}. Hence, when $\sigma\neq 0$, we have a stabilized system.


\begin{figure}
\centering
\subcaptionbox[Short Subcaption]{%
    Phase Portrait%
    \label{fig:zsqphase}%
}
[%
    0.45\textwidth 
]%
{%
    \includegraphics[width=0.45\textwidth]%
    {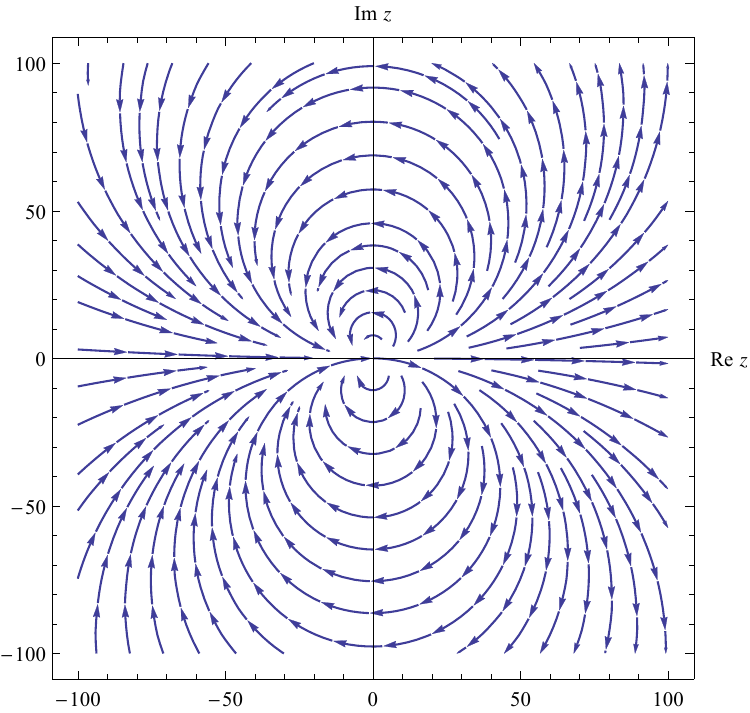}%
}%
\hspace{0.05\textwidth} 
\subcaptionbox[Short Subcaption]{%
    Trajectories from the initial condition $z_0=2$%
    \label{fig:zsqtraj}%
}
[%
    0.45\textwidth 
]%
{%
    \includegraphics[width=0.47\textwidth]%
    {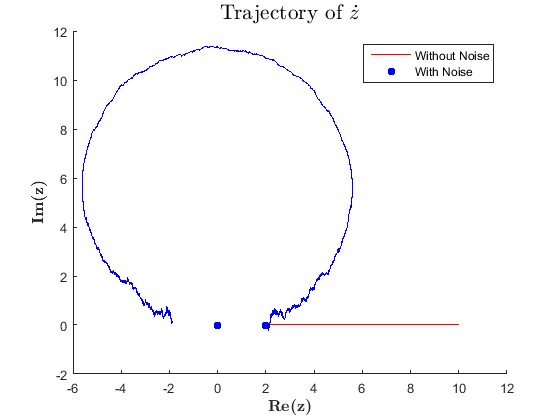}%
}%
\caption[Short Caption]{Example of $\dot{z}=z^2$}
\end{figure}


From the physics perspective, equations (\ref{eq:poly}) and (\ref{eq:zn}) are interesting because they correspond to complex-valued Langevin equations arising from minimization of certain complex-valued energy functionals, also known as \emph{path integrals} (but with non-positive-definite integrands). Making sense of these path integrals will lead to better understanding of lattice gauge quantum chromodynamics (QCD) models. Recently, G. Aarts and collaborators have studied special cases of (\ref{eq:poly}) and (\ref{eq:zn}) from the numerical point-of-view, \emph{cf.\@} \cite{Aarts1, Aarts2} and references therein. They have numerically calculated the invariant measure of the SDE, and obtained an approximate spectrum of the infinitesimal generator of the SDE. However, there is still work to do to bridge the gap between the stochastic analysis (ergodicity, exponential convergence to equilibrium) and the numerical calculations (spectral simulations, physics implications) for these SDEs.

The primary goal of this paper is to extend (\ref{eq:poly}) and (\ref{eq:zn}) to the multivariate setting. Herzog and Mattingly established a criterion for the existence of a strictly positive transition density of a degenerate diffusion in \cite{transition}; specifically, one that satisfies SDE (\ref{eq:poly}) and similar SDEs of higher dimensions. One particular example examined in \cite{transition} is the Galerkin truncations of a randomly forced viscous Burgers' equation. The Burgers' equation is a fundamental PDE occurring in fluid mechanics. Herzog and Mattingly provided a sufficient condition for the existence of a positive transition density for this Burgers' equation with additive noise.

We would like to extend the stability condition established in \cite{2011,noise1,noise2} to the stochastics Burgers' equation. To do so, we begin with a simple model of the Burger's equation stated in \cite{transition}. Consider the $\mathbb{C}^2$-valued system of ODEs 
  \begin{equation}\label{eq:1}
    \left\{
                \begin{array}{l}
                     \dot{z}_t = -\nu z_t + \alpha z_tw_t \\
		\dot{w}_t = -\nu w_t + \beta z_tw_t
                \end{array}
     \right.
  \end{equation}
with initial condition $(z_0, w_0)\in \mathbb{C}^2$. Here $\nu\in \mathbb{R}^+$ and $\alpha, \beta\in\mathbb{R}$. This system has a pair of fixed points: a sink at the origin and a saddle point at $(\nu/\beta, \nu/\alpha)$. Trajectories which lie on the unstable manifold associated with the saddle point, but not in the basin of attraction near the origin, will blow up in finite time. So the question is: Which types of complex-valued Brownian motions can one add to stabilize these explosive trajectories? And if so, can one prove ergodicity of the system?

We show that the answer is positive in at least one scenario. To state the result, let us fix some definitions. 
Let $(\Omega, \mathcal{F},\mathcal{F}_t,P)$ be a probability space equipped with a filtration $\{\mathcal{F}_t\}_{t\geq 0}$ and a probability measure $P$. Then let $(X_t)_{t\geq 0}$ be a $\mathbb{R}^d$-valued stochastic process adapted to the filtration $\{\mathcal{F}_t\}_{t\geq 0}$. Denote by $P_x$ the law of $X_t$ starting at $x\in \mathbb{R}^d$.

\begin{definition}
\label{def:explosiontime}
For each fixed $n\geq 0$, let $\xi_n=\inf\{t\geq 0: |X_t|\geq n\}$ be the exit time of $X_t$ from the ball of radius $n$ centered at the origin. Then the \textit{explosion time} $\xi$ of the process $X_t$ is defined as $\xi:=\sup_n\xi_n$.
\end{definition}

\begin{definition}
\label{def:nonexplosive}
A process $X_t$ is said to be \textit{nonexplosive} if $P_x(\xi<\infty)=0$ for all $x\in\mathbb{R}^d$.
\end{definition}

Our main result is:

\begin{theorem}
\label{thm:main}
Consider the system of SDEs
\begin{align}\label{eq:MainSDE}
  \left\{
                \begin{array}{l}
                     dz_t = \left(-\nu z_t + \alpha z_tw_t \right)\,dt+ \sigma\,dB_t \\
		dw_t = \left(-\nu w_t + \beta z_tw_t\right)\,dt + \frac{\beta}{\alpha} \sigma \,dB_t
                \end{array}
     \right.
\end{align}
with initial condition $X_0 = (z_0, w_0) \in \mathbb{C}^2$, where $\nu \in \mathbb{R}^+$, $\alpha,\beta\in \mathbb{R}\setminus \{0\}$, $\sigma \in \mathbb{R}\setminus \{0\}$, and $B_t = B_t^{(1)} + iB_t^{(2)}$ is a $\mathbb{C}$-valued standard Brownian motion.
Then the solution of (\ref{eq:MainSDE}) is nonexplosive, and moreover, possesses a unique ergodic (invariant) measure.
\end{theorem}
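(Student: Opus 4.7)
The plan is to exploit a hidden linear symmetry in~(\ref{eq:MainSDE}). The choices of drift coefficients and of noise coefficients are precisely aligned so that the combination $u_t := \beta z_t - \alpha w_t$ decouples. A direct It\^o computation shows that the nonlinear terms $\alpha\beta z_t w_t$ cancel and the Brownian increments cancel as well, leaving the purely deterministic linear ODE $du_t = -\nu u_t\,dt$. Hence $u_t = u_0 e^{-\nu t}$ is an exponentially decaying, sample-path-independent complex number. Using $\alpha w_t = \beta z_t - u_t$ to eliminate $w_t$ from the first equation, the system reduces to the single $\mathbb{C}$-valued SDE
\begin{equation*}
dz_t = \bigl(\beta z_t^2 - (\nu + u_t) z_t\bigr)\,dt + \sigma\,dB_t,
\end{equation*}
which is exactly the Herzog--Mattingly quadratic equation~(\ref{eq:zn}) with $n+1=2$, perturbed by a linear term whose coefficient decays exponentially in time. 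Since $w_t$ is recovered as a deterministic function of $z_t$ and $t$, nonexplosion and ergodicity of $(z_t,w_t)$ reduce to the same properties for this scalar equation.

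The second step is to compare the reduced equation to the autonomous Herzog--Mattingly equation $dz_t = \beta z_t^2\,dt + \sigma\,dB_t$ using the tools announced in the abstract. On any interval $[0,T\wedge\xi_n]$ truncated before explosion, Girsanov's theorem produces a measure $Q_n$, equivalent to~$P$, under which $\widetilde B_t := B_t + \sigma^{-1}\int_0^t(\nu + u_s)z_s\,ds$ is a complex Brownian motion and $z_t$ obeys the pure Herzog--Mattingly SDE. The Novikov/Kazamaki condition is satisfied on $[0,T\wedge\xi_n]$ because the integrand is bounded there, and after an appropriate time change the lower-order exponentially decaying term $u_s$ is absorbed. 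Nonexplosion for the Herzog--Mattingly process then transfers back to nonexplosion for $z_t$ under $P$ through the equivalence of the measures, and hence to nonexplosion of the full $(z_t,w_t)$-system.

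The third step is to upgrade nonexplosion to ergodicity via Lyapunov theory on the original $\mathbb{C}^2$-valued generator~$\mathcal{L}$ of~(\ref{eq:MainSDE}). The natural candidate is a function of the form $\Phi(z,w) = \varphi(z) + \psi(u)$, where $\varphi$ is (a modification of) the Herzog--Mattingly Lyapunov function for the quadratic equation and $\psi$ is a simple polynomial in $|u|$ controlling the decoupled $u$-direction. Because $u$ satisfies a linear contractive ODE, $\mathcal{L}\psi(u)$ contributes a strictly negative term, and $\mathcal{L}\varphi(z)$ inherits the strong negative drift at infinity from the Herzog--Mattingly construction up to a bounded correction arising from $u_t z_t$. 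One obtains a drift condition of the form $\mathcal{L}\Phi \leq -c\,\Phi + K$, which yields existence of an invariant measure. For uniqueness one observes that, although the driving Brownian motion is only $\mathbb{C}$-valued, the change of coordinates $(z,w)\mapsto (z,u)$ makes the noise genuinely elliptic in $z$ and the $u$-dynamics deterministic; combined with the continuity of sample paths this gives the strong Feller and irreducibility properties needed for a standard Harris-type argument.

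The main obstacle I expect is the second step: controlling the Radon--Nikodym derivative and the time change uniformly enough to push nonexplosion through on all of $[0,\infty)$, not merely on truncated intervals. The linear term $-(\nu+u_t)z_t$ is innocuous in amplitude but its Girsanov density involves $z_t^2$, which is precisely the quantity one fears might blow up. The remedy is to run the whole argument conditionally on $\{\xi_n > t\}$, deduce ergodicity of the localized process, and use Lyapunov-based tightness from the third step to send $n\to\infty$. The Lyapunov construction in step three is what makes this limit legitimate, so the two steps must be carried out in tandem rather than independently.
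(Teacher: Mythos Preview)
Your decoupling observation $u_t=\beta z_t-\alpha w_t$ is exactly the paper's key move (it is $2\beta\tilde w_t$ in the paper's notation), but you then make the wrong choice for the \emph{second} coordinate, and this is where the argument breaks. You keep the original $z_t$, obtaining
\[
dz_t=\bigl(\beta z_t^2-(\nu+u_t)z_t\bigr)\,dt+\sigma\,dB_t,
\]
so the perturbation to the Herzog--Mattingly equation is $-u_t z_t$, which depends on the unknown process. As you yourself note, the Girsanov density then involves $\int_0^T |z_s|^2\,ds$, and Novikov's condition is precisely the kind of exponential-moment bound one does not have a priori for a possibly explosive process. Your proposed remedy---localize to $\{\xi_n>t\}$, invoke Lyapunov tightness, and pass to the limit---is circular as written: the Lyapunov construction in step three is only sketched, and the ``bounded correction arising from $u_t z_t$'' in $\mathcal L\varphi(z)$ is not bounded at all, since $u_t z_t$ grows linearly in $z$ near infinity and interacts with whatever growth $\varphi$ has.

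The paper avoids this entirely by taking the \emph{symmetric} combination $\tilde z_t=\tfrac12\bigl(z_t+\tfrac{\alpha}{\beta}w_t\bigr)$ as the second coordinate. A short computation (use $z=\tilde z+\tilde w$, $\tfrac{\alpha}{\beta}w=\tilde z-\tilde w$, so $\alpha zw=\beta(\tilde z^2-\tilde w^2)$) gives
\[
d\tilde z_t=\bigl(-\nu\tilde z_t+\beta\tilde z_t^2-\beta\tilde w_0^2 e^{-2\nu t}\bigr)\,dt+\sigma\,dB_t.
\]
Now the perturbation is the \emph{deterministic} function $-\beta\tilde w_0^2 e^{-2\nu t}$, the Girsanov drift $\theta(t)=\sigma^{-1}\beta\tilde w_0^2 e^{-2\nu t}$ is bounded and in $L^2([0,\infty))$, Novikov holds trivially on the whole half-line, and the change of measure is uniformly integrable. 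Nonexplosion and ergodicity then transfer directly from the autonomous equation $d\zeta_t=(-\nu\zeta_t+\beta\zeta_t^2)\,dt+\sigma\,dB_t$ by equivalence of measures, with no localization and no separate Lyapunov argument needed. The fix to your outline is therefore a one-line change of variable; once you use $\tilde z$ instead of $z$, steps two and three collapse into a clean Girsanov comparison.
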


In fact, we can say more about the behavior of the invariant measure. To do so, we first introduce the notation
\begin{align}
x_1={\rm Re}(z), \quad x_2={\rm Im}(z), \quad x_3={\rm Re}(w), \quad x_4= {\rm Im}(w).
\end{align}
Then let
\begin{align}
                  y_1=\frac{1}{2}\left(x_1+\frac{\alpha}{\beta}x_3\right),~
                 y_2=\frac{1}{2}\left(x_1-\frac{\alpha}{\beta}x_3\right),~
                 y_3=\frac{1}{2}\left(x_2+\frac{\alpha}{\beta}x_4\right),~
                 y_4=\frac{1}{2}\left(x_2-\frac{\alpha}{\beta}x_4\right).
\end{align}
Finally, let $\tilde{z} = y_1 + iy_3$ and $\tilde{w} = y_2+ iy_4$.

\begin{proposition}[Invariant measure]
\label{prop:main}
The system (\ref{eq:MainSDE}) has the unique invariant measure $\pi(\tilde{z})\delta_0(\tilde{w})$, where $\pi$ is the unique invariant measure for the $\mathbb{C}$-valued system
\begin{align}
\label{eq:z20}
d\tilde{z}_t=(-\nu \tilde{z}_t+\beta\tilde{z}_t^2)\,dt+\sigma\,dB_t,
\end{align}
and $\delta_0$ is the delta measure at $0$. 
\end{proposition}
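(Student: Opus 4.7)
The plan is to exploit the linear change of variables $(z,w) \mapsto (\tilde z, \tilde w)$ defined in the statement, which is engineered so as to completely decouple the noise. Inverting the relations gives $z = \tilde z + \tilde w$ and $w = \tfrac{\beta}{\alpha}(\tilde z - \tilde w)$, so $\alpha z w = \beta(\tilde z^2 - \tilde w^2)$. Applying this affine transformation to (\ref{eq:MainSDE}), the specific coefficient $\tfrac{\beta}{\alpha}\sigma$ prescribed for the noise in the $w$-equation causes the Brownian increments in the $\tilde w$-direction to cancel exactly, producing
\[
  d\tilde z_t = \bigl[-\nu\, \tilde z_t + \beta(\tilde z_t^2 - \tilde w_t^2)\bigr]\,dt + \sigma\, dB_t,
  \qquad
  d\tilde w_t = -\nu\, \tilde w_t\, dt.
\]
This decoupling is the main algebraic content of the argument and is precisely why the noise ratio $\beta/\alpha$ is chosen as it is in (\ref{eq:MainSDE}).

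The second step is to exploit the deterministic nature of the $\tilde w$-equation. Its explicit solution $\tilde w_t = \tilde w_0\, e^{-\nu t}$, combined with $\nu > 0$, shows that $\tilde w_t \to 0$ pathwise regardless of the initial condition. Let $\mu$ denote the unique invariant measure for (\ref{eq:MainSDE}) guaranteed by Theorem~\ref{thm:main}, viewed now as a measure on $(\tilde z, \tilde w)$-space. For any $g \in C_b(\mathbb{C})$, invariance combined with the explicit flow yields
\[
  \int g(\tilde w)\, d\mu(\tilde z, \tilde w) \;=\; \int g(\tilde w\, e^{-\nu t})\, d\mu(\tilde z, \tilde w)
\]
for every $t \ge 0$; sending $t \to \infty$ and applying dominated convergence (with the trivial bound $\|g\|_\infty$) gives $\int g\, d\mu_{\tilde w} = g(0)$, so the $\tilde w$-marginal of $\mu$ is $\delta_0$.

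The third step is to restrict to the invariant submanifold $\{\tilde w = 0\}$. Plugging $\tilde w_t \equiv 0$ into the $\tilde z$-equation reproduces exactly the scalar SDE (\ref{eq:z20}), which is of the Herzog--Mattingly type (\ref{eq:poly}) and hence admits its own unique invariant measure $\pi$. Disintegrating $\mu$ as $\mu(\,\cdot\, \mid \tilde w = 0)\otimes \delta_0$ and noting that on the flow-invariant set $\{\tilde w=0\}$ the $\tilde z$-dynamics is autonomous and reduces to (\ref{eq:z20}), the conditional measure $\mu(\,\cdot\, \mid \tilde w=0)$ must be invariant for that scalar SDE, so by uniqueness of $\pi$ we conclude $\mu = \pi(\tilde z)\,\delta_0(\tilde w)$. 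The only mildly delicate point is the dominated-convergence step used to pin down the $\tilde w$-marginal; this is entirely standard given the uniform contraction of $\tilde w_0 \mapsto \tilde w_0 e^{-\nu t}$, and I do not expect any real obstacle once the change of variables is in place.
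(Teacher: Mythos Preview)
Your argument is correct and takes a genuinely different route from the paper's. The paper proves Theorem~\ref{thm:main} and Proposition~\ref{prop:main} in one stroke: it keeps $\tilde w_0$ arbitrary, regards the $\tilde z$-equation as a time-\emph{inhomogeneous} SDE with the extra drift $-\beta\tilde w_0^2 e^{-2\nu t}$, and uses a Girsanov change of measure (together with uniform integrability of the Radon--Nikodym martingale) to transfer the ergodic theorem for the reference process (\ref{eq:z20}) to $\tilde z_t$. Ces\`aro convergence of $P_{z_0}(\tilde z_s\in\cdot)$ to $\pi$ is established directly, and the extension to $(\tilde z,\tilde w)$ then uses the deterministic decay of $\tilde w$ much as you do.

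You instead take Theorem~\ref{thm:main} as a black box supplying a unique invariant measure $\mu$, and identify it by a support argument: the contraction $\tilde w_0\mapsto \tilde w_0 e^{-\nu t}$ forces the $\tilde w$-marginal of any invariant measure to be $\delta_0$, and on the invariant slice $\{\tilde w=0\}$ the $\tilde z$-dynamics is exactly (\ref{eq:z20}), so Herzog--Mattingly uniqueness pins down the $\tilde z$-marginal as $\pi$. This is more elementary---no Girsanov, no moment estimates on $M_t$---and isolates cleanly why the specific noise ratio $\beta/\alpha$ matters. The trade-off is purely organizational: in this paper Theorem~\ref{thm:main} is not proved independently of Proposition~\ref{prop:main}, so your argument leans on a statement whose proof already contains the identification you are after. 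If one had an independent existence proof (say via a Lyapunov function for the full $\mathbb{C}^2$ system), your route would be the more economical way to name the limit; the paper's route, by contrast, delivers the full pathwise ergodic theorem from every initial condition, not just the identity of the invariant measure.
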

Note that (\ref{eq:z20}) is of the form (\ref{eq:poly}) with $n=1$, $a_2=\beta$, $a_1=-\nu$, and $a_0=0$. As mentioned previously, Herzog and Mattingly \cite{noise1} have established the stability of (\ref{eq:poly}).

The proofs of Theorem \ref{thm:main} and Proposition \ref{prop:main} involve a change of coordinates, which reduces the $\mathbb{C}^2$-system (\ref{eq:MainSDE}) to a quasi-$\mathbb{C}$-system similar to (\ref{eq:z20}), and an application of the Girsanov theorem. We also present numerical evidence that supports Theorem \ref{thm:main} (in the case that an isotropic Brownian noise is added), as well as the case where an anisotropic Brownian noise is added.

The layout of this paper is as follows. In \S\ref{sec:analysis}, we describe the linear transformation which reduces our deterministic $\mathbb{C}^2$-valued ODEs to a deterministic quasi-$\mathbb{C}$-valued ODE. We also perform a dynamical analysis to identify the explosive regions of the deterministic system. This serves as preparation for \S\ref{sec:ergo}, where we add an isotropic Brownian noise and show rigorously the reduction of our $\mathbb{C}^2$-valued SDEs (\ref{eq:MainSDE}) to a quasi-$\mathbb{C}$-valued SDE similar to (\ref{eq:z20}). From this we can deduce the ergodic properties of our system (\ref{eq:MainSDE}) \emph{\`{a} la} Herzog, and thus prove Theorem \ref{thm:main} and Proposition \ref{prop:main}. In \S\ref{sec:numerical}, we give numerical evidence for the stabilization of our $\mathbb{C}^2$ system by adding an isotropic or an anisotropic Brownian noise. Some concluding remarks are given in \S\ref{sec:conclusion}.

\subsubsection*{Acknowledgements}
The problem studied in this paper was proposed by David P. Herzog, to whom we are grateful for his help and guidance with the project. JPC and FNS also thank Iddo Ben-Ari, Jonathan Mattingly, and Alexander Teplyaev for useful discussions, and especially Maria Gordina for a careful reading of the manuscript and helpful critiques.

\section{Analytical observations and heuristics}\label{sec:analysis}
\subsection{Reduction via a change of coordinates}\label{sec:reduce}
We begin by rewriting the system (\ref{eq:1}) in terms of the coordinate ${\bf x}= (x_1, x_2, x_3, x_4)$, where 
 \begin{align}
x_1={\rm Re}(z), \quad x_2={\rm Im}(z), \quad x_3={\rm Re}(w), \quad x_4= {\rm Im}(w).
 \end{align}
This results in the following system of equations:
\begin{align}
\label{eq:ODENew}
    \left\{
                \begin{array}{ll}
                    \dot{x}_1 =-\nu x_1 + \alpha (x_1x_3-x_2x_4) \\
		\dot{x}_2 =-\nu x_2 + \alpha (x_2x_3+x_1x_4) \\
		\dot{x}_3 =-\nu x_3 + \beta (x_1x_3-x_2x_4) \\
		\dot{x}_4 =-\nu x_4 + \beta (x_2x_3+x_1x_4), \\
                \end{array}
     \right.
\end{align}
with initial condition $(x_1(0), x_2(0), x_3(0), x_4(0))$. In what follows, we shall assume that $\alpha>0$ and $\beta>0$ without loss of generality. (If one of $\alpha$ or $\beta$ vanishes, the system degenerates. For all other cases, one can replace some of the $x_i$ by $-x_i$ to effectively make $\alpha$ and $\beta$ positive.)

 \begin{figure}
 \centering
 \includegraphics[width=0.45\textwidth]{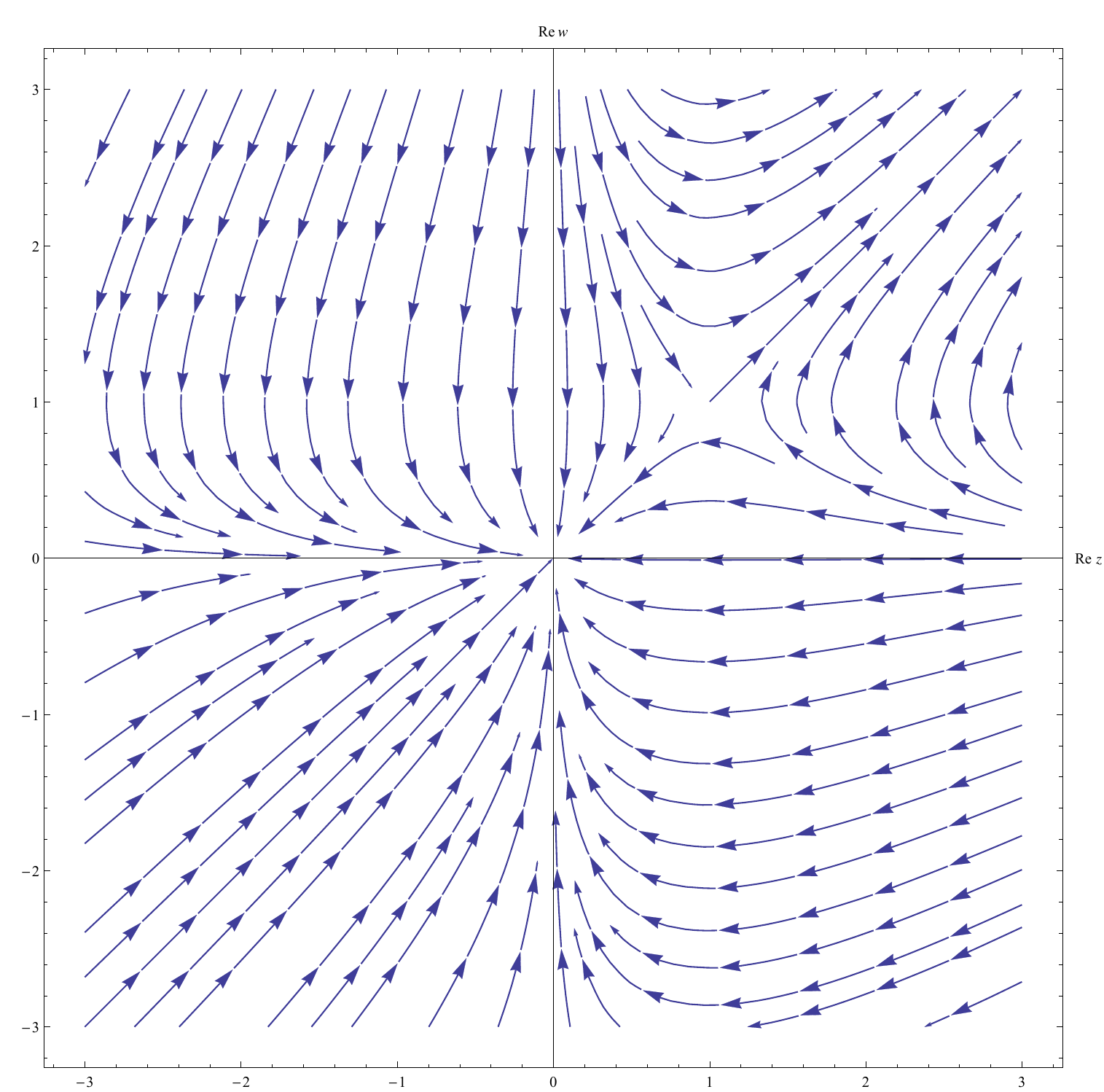}
 \caption{Phase portrait of the system (\ref{eq:ODENew}) restricted to the $x_1 x_3$-plane.}
 \label{fig:real}
\end{figure}

By setting the time derivatives to $0$, we readily find the two fixed points of the system, ${\bf 0}=(0,0,0,0)$ and ${\bf p}=(\nu/\beta, 0, \nu/\alpha, 0)$. We then linearize (\ref{eq:ODENew}) about each of the fixed points:
\begin{align}
\label{jac1}
\begin{bmatrix}\dot{x}_1 \\ \dot{x}_2 \\ \dot{x}_3 \\ \dot{x}_4\end{bmatrix} = \begin{bmatrix} -\nu & 0 & 0 & 0 \\ 0 & -\nu & 0 & 0 \\ 0 & 0 & -\nu & 0 \\ 0 & 0 & 0 & -\nu \end{bmatrix} \begin{bmatrix}x_1\\x_2\\x_3\\x_4\end{bmatrix} + O({\bf x}^2) ,
\end{align} 
\begin{align}
\label{jac2}
\begin{bmatrix}\dot{x}_1 \\ \dot{x}_2 \\ \dot{x}_3 \\ \dot{x}_4\end{bmatrix} = \begin{bmatrix} 0 & 0 & \nu\alpha/\beta & 0 \\ 0 & 0 & 0 & \nu\alpha/\beta \\ \nu\beta/\alpha & 0 & 0  & 0 \\ 0 & \nu \beta/\alpha & 0 & 0 \end{bmatrix} \begin{bmatrix}x_1-\nu/\beta \\x_2\\x_3-\nu/\alpha\\x_4\end{bmatrix} + O(({\bf x}-{\bf p})^2) .
\end{align}
It is clear from the Jacobian in (\ref{jac1}) that ${\bf 0}$ is an attracting fixed point. On the other hand, the Jacobian in (\ref{jac2}) has eigensolutions
\begin{align}
\label{eigdxn}
&\lambda_1= -\nu, \quad {\bf e}_1 = \begin{bmatrix} 0 \\  -\alpha/\beta \\ 0 \\ 1\end{bmatrix}; \quad
\lambda_2= -\nu, \quad {\bf e}_2 = \begin{bmatrix}  -\alpha/\beta \\ 0 \\ 1 \\ 0\end{bmatrix}; \\
\nonumber &\lambda_3= +\nu, \quad {\bf e}_3 = \begin{bmatrix} 0 \\  +\alpha/\beta \\ 0 \\ 1\end{bmatrix}; \quad
\lambda_4= +\nu, \quad {\bf e}_4 = \begin{bmatrix} +\alpha/\beta \\ 0 \\ 1 \\ 0\end{bmatrix}.
\end{align} 
This implies that a $2$-dimensional unstable manifold and a $2$-dimensional stable manifold are associated to the saddle point ${\bf p}$. See Figure \ref{fig:real} for the phase portrait.

The above linearization analysis shows that $\nu$ and $\alpha/\beta$ are the only figures of merit for the system (\ref{eq:ODENew}). Moreover, the symmetry of the eigensolutions suggests that the effective dynamics may be simpler than the $4$-dimensional nature of the system. To this end, let us make a change of coordinates so that the new coordinate directions agree with the eigendirections in (\ref{eigdxn}):
\begin{equation}
\label{eq:coordchange}
                  y_1=\frac{1}{2}\left(x_1+\frac{\alpha}{\beta}x_3\right),~
                 y_2=\frac{1}{2}\left(x_1-\frac{\alpha}{\beta}x_3\right),~
                 y_3=\frac{1}{2}\left(x_2+\frac{\alpha}{\beta}x_4\right),~
                 y_4=\frac{1}{2}\left(x_2-\frac{\alpha}{\beta}x_4\right).
\end{equation} 

By replacing the $x_i$ by the $y_i$, we rewrite (\ref{eq:1}) as
\begin{equation}\label{eq:5}
    \left\{
                \begin{array}{l}
                  \dot{y_1}=-\nu y_1+\beta\left[(y_1^2-y_2^2)-(y_3^2-y_4^2)\right]\\
                 \dot{y_2}=-\nu y_2\\
                 \dot{y_3}=-\nu y_3+2\beta(y_1y_3-y_2y_4)\\
                 \dot{y_4}=-\nu y_4,\\
                \end{array}
              \right. 
\end{equation} 
with initial condition $(y_1(0), y_2(0), y_3(0), y_4(0))$. Observe that $y_2$ and $y_4$ evolve autonomously under (\ref{eq:5}), with solutions
\begin{align}
y_2(t) = y_2(0) e^{-\nu t}, \quad y_4(t) = y_4(0) e^{-\nu t}.
\end{align}
Plugging these back into (\ref{eq:5}) yields the following $2$-dimensional system:
\begin{equation}
\label{eq:6}
    \left\{
                \begin{array}{l}
                  \dot{y_1}(t)=-\nu y_1(t)+\beta\left(y_1^2(t)-y_3^2(t)\right)-\beta\left(y_2^2(0)-y_4^2(0)\right)e^{-2\nu t}\\
                 \dot{y_3}(t)=-\nu y_3(t)+2\beta y_1(t)y_3(t)-2\beta y_2(0)y_4(0)e^{-2\nu t}
                \end{array}
              \right..
\end{equation} 

Notice that by setting $\tilde{z} = y_1+i y_3$, and without the exponentially decaying terms of order $e^{-2\nu t}$, (\ref{eq:6})
resembles the system
\begin{equation}\label{eq:7}
\dot{\tilde{z}}=-\nu\tilde{z}+\beta\tilde{z}^2,
\end{equation}
the stochastic stabilization of which was studied by Herzog and Mattingly \cite{noise1}. So on a heuristic level, our stabilization problem in $\mathbb{C}^2$,
\begin{align}\label{eq:orig}
\left\{\begin{array}{l}
dz_t = \left(-\nu z_t + \alpha z_t w_t\right)\,dt + \text{Brownian noise} \\ 
dw_t = \left(-\nu w_t + \beta z_t w_t\right)\,dt + \text{Brownian noise}
\end{array} \right.,
\end{align}
can be reduced to the stabilization problem in $\mathbb{C}$,
\begin{align}\label{eq:new}
d\tilde{z}_t = \left(-\nu \tilde{z}_t + \beta \tilde{z}_t^2\right)\,dt + \text{Brownian noise}.
\end{align}
In \S\ref{sec:ergo}, we will make this heuristic rigorous for a class of Brownian noises.


\subsection{Conditions for (non)explosivity of deterministic solutions}\label{sec:explosive}

Analyzing (\ref{eq:6}) for sets of initial conditions corresponding to explosive solutions is now more tractable, since (\ref{eq:1}) has been reduced to a system evolving over $\mathbb{R}^2$ rather than $\mathbb{R}^4$. The following proposition gives us estimates on the boundaries of the explosive regions.

Denote ${\bf y}(t) := (y_1(t),y_2(t),y_3(t),y_4(t))$ and its initial condition ${\bf y}_0:=(y_1(0),y_2(0),y_3(0),y_4(0))$.\\ 
Let $I_{\rm max}$ be the largest interval $[0,T)$ on which ${\bf y}(t)$ is defined. By the existence and uniqueness theorem for first-order ODEs, $I_{\rm max}\in (0,\infty]$. Also, since the RHS of (\ref{eq:6}) is real analytic, the solutions to (\ref{eq:6}) are also real analytic. It suffices to show that they are continuously differentiable. 

We now state a sufficient condition for explosivity.
\begin{proposition} \label{prop1}

The solution ${\bf y}(t)$ of (\ref{eq:6}) with initial condition ${\bf y}_0$ is explosive if all of the following conditions hold:
\begin{enumerate}[(I)]
\item $y_3(0)=0$.\label{itm:1}
\item Either $y_2(0)=0$ or $y_4(0)=0$.\label{itm:2}
\item $y_1(0)>C$ for some large enough constant $C$ which depends on $\beta, \nu, y_2(0), y_4(0)$.\label{itm:3}
\end{enumerate}
\end{proposition}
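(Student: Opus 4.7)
The plan is to use conditions (\ref{itm:1}) and (\ref{itm:2}) to collapse the system (\ref{eq:6}) onto the $y_1$-axis, and then show that the resulting scalar ODE for $y_1$ blows up in finite time whenever $y_1(0)$ is taken large enough.

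First I would observe that under condition (\ref{itm:2}) the forcing term $-2\beta y_2(0) y_4(0) e^{-2\nu t}$ in the $\dot{y}_3$-equation vanishes identically. Combined with condition (\ref{itm:1}), $y_3$ then satisfies the homogeneous linear equation $\dot{y}_3 = (2\beta y_1(t) - \nu)\, y_3$ with $y_3(0) = 0$, so by uniqueness $y_3(t) \equiv 0$ on $I_{\max}$. Substituting this back into the $\dot{y}_1$-equation reduces the problem to the scalar ODE
\begin{equation}
\dot{y}_1(t) = \beta y_1^2(t) - \nu y_1(t) - K e^{-2\nu t}, \qquad K := \beta \bigl(y_2^2(0) - y_4^2(0)\bigr),
\end{equation}
and the key observation is that $|K| \leq M$, where $M := \beta \max(y_2(0)^2, y_4(0)^2)$ is a fixed constant depending only on $\beta, y_2(0), y_4(0)$.

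The second step is a standard comparison/bootstrap argument. I would choose $C$ large enough that whenever $y_1(0) \geq C$ one has $\tfrac{1}{2}\beta y_1(0)^2 \geq \nu y_1(0) + M$; for instance $C := \nu/\beta + \sqrt{(\nu/\beta)^2 + 2M/\beta}$ suffices. With this choice, $\dot{y}_1(0) \geq \tfrac{1}{2}\beta y_1(0)^2 > 0$, so $y_1$ is initially increasing, and the bound $y_1(t) \geq y_1(0) \geq C$ therefore persists for as long as the solution exists. The same lower bound then propagates to give $\dot{y}_1(t) \geq \tfrac{1}{2}\beta y_1^2(t)$ throughout $I_{\max}$. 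Comparing with the solution of $\dot{u} = \tfrac{1}{2}\beta u^2$ with $u(0) = y_1(0)$, which explodes at $t^* = 2/(\beta y_1(0)) < \infty$, I conclude that $y_1(t) \to \infty$ by time $t^*$, and hence $\mathbf{y}(t)$ is explosive.

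The main conceptual hurdle is simply choosing the threshold $C$ so that the quadratic forcing $\beta y_1^2$ dominates both the linear drag $\nu y_1$ and the transient perturbation $K e^{-2\nu t}$ \emph{uniformly} on $[0, t^*)$; because the bound self-improves as soon as $y_1$ is monotone-increasing, all the work is concentrated at $t = 0$. There is no subtlety in the reduction step itself, since conditions (\ref{itm:1}) and (\ref{itm:2}) were engineered precisely to kill the off-axis coupling in (\ref{eq:6}).
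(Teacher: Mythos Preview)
Your proof is correct and follows the same overall strategy as the paper: use conditions (\ref{itm:1}) and (\ref{itm:2}) to force $y_3\equiv 0$ on $I_{\max}$, reduce (\ref{eq:6}) to a scalar ODE for $y_1$, and then establish finite-time blow-up by comparison with an explicitly solvable equation.

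The execution differs slightly. Rather than bounding $|K|$ uniformly and comparing with $\dot u=\tfrac{1}{2}\beta u^2$, the paper treats the two sub-cases of condition (\ref{itm:2}) separately and exploits the sign of the perturbation. When $y_2(0)=0$ the term $-Ke^{-2\nu t}=\beta y_4^2(0)e^{-2\nu t}$ is nonnegative, so one may compare directly with $\dot x=-\nu x+\beta x^2$ and obtain the sharper threshold $C=\nu/\beta$. When $y_4(0)=0$ the paper compares with the autonomous equation $\dot x=-\nu x+\beta x^2-\beta y_2^2(0)$ and takes $C$ to be its larger fixed point $\tfrac{1}{2}\bigl(\nu/\beta+\sqrt{(\nu/\beta)^2+(2y_2(0))^2}\bigr)$. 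Your unified treatment is tidier and avoids the case split; the paper's version yields somewhat smaller explicit constants. Since the proposition only asserts the existence of \emph{some} threshold $C$, either route proves the statement as written.
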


\begin{remark}
\label{rem:y3always0}
Conditions (\ref{itm:1}) and (\ref{itm:2}) imply that $y_3(t) = 0$ for all $t \in I_{\rm max}$.
\end{remark}

\begin{proof}[Proof of Proposition \ref{prop1}]
Let $y_3(0)=0$.
Suppose $y_2(0) = 0$, so that (\ref{eq:6}) reduces to
\begin{equation}
                 \dot{y_1}(t)=-\nu y_1(t)+\beta y_1^2(t)+ \beta y_4^2(0)e^{-2\nu t},\\
\label{eq:8}
\end{equation} 
with initial condition $y_1(0)$. The solutions of (\ref{eq:8}) are bounded below by the solutions of
\begin{equation}
    \left\{
                \begin{array}{ll}
                  \dot{x}(t)=-\nu x(t)+\beta x^2(t)\\
		x(0) = y_1(0)
                \end{array}
              \right.,
\label{eq:9}
\end{equation} 
since $\dot{y}_1(t)>\dot{x}(t)$ for all times $t$. It is easy to verify that solutions of (\ref{eq:9}) are explosive whenever $y_1(0)>\frac{\nu}{\beta}$. Therefore, solutions of (\ref{eq:8}) are explosive under the same initial condition $y_1(0)>\frac{\nu}{\beta}$.

Next suppose $y_4(0) = 0$. Then (\ref{eq:6}) reduces to
\begin{equation}
                 \dot{y_1}(t)=-\nu y_1(t)+\beta y_1^2(t) - \beta y_2^2(0)e^{-2\nu t},
\label{eq:10}
\end{equation} 
with initial condition $y_1(0)$. Similar to the arguments given above, we see that the solutions of (\ref{eq:10}) are bounded below by the solutions of
\begin{equation}
    \left\{
                \begin{array}{ll}
                  \dot{x}(t)=-\nu x(t)+\beta x^2(t)-  \beta y_2^2(0) \\
		x(0) = y_1(0)
                \end{array}
              \right.,
\end{equation} 
which explode whenever $y_1(0)>\frac{1}{2}\Big{(}\frac{\nu}{\beta}+\sqrt{(\frac{\nu}{\beta})^2+(2y_2(0))^2}\Big{)}$.
\end{proof}


\begin{figure}
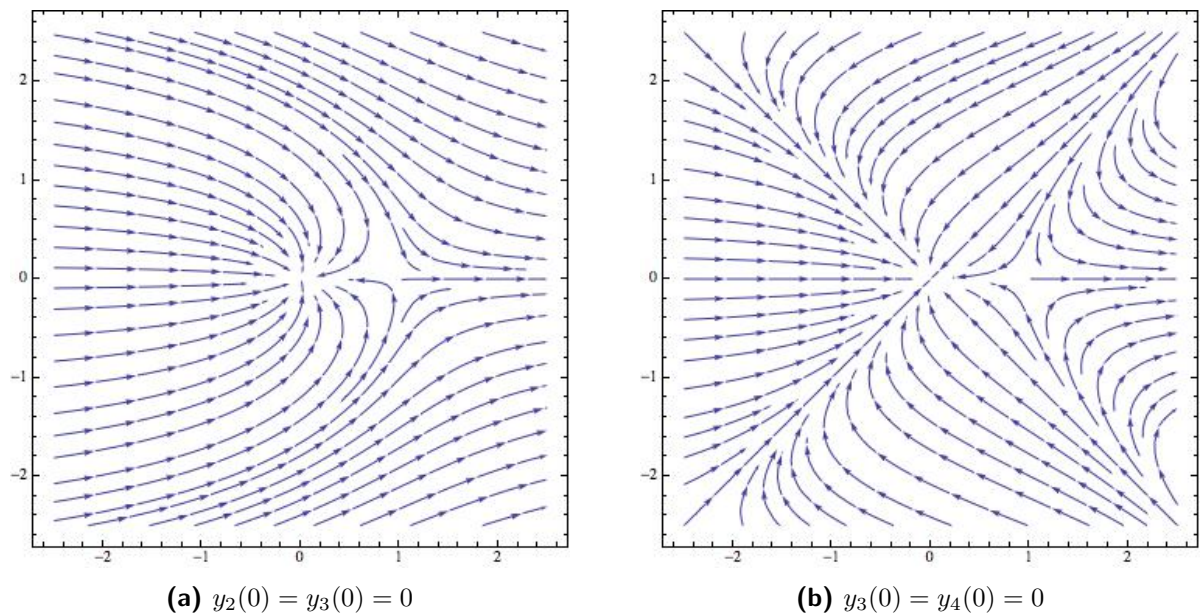

\centering
\subcaptionbox[Short Subcaption]{%
    $y_2(0)=y_3(0)=0$%
    \label{fig:4}%
}
[%
    0.45\textwidth 
]%
{%
    \includegraphics[width=0.45\textwidth]%
    {y20phaseplane}%
}%
\hspace{0.05\textwidth} 
\subcaptionbox[Short Subcaption]{%
    $y_3(0)=y_4(0)=0$%
    \label{fig:5}%
}
[%
    0.45\textwidth 
]%
{%
    \includegraphics[width=0.45\textwidth]%
    {y40phaseplane}%
}%
\caption[Short Caption]{Phase Portraits of $\dot{\bf y}(t)$ for $\beta=\nu=1$ }
\end{figure}

\begin{remark} From simulations it appears difficult to analytically pin down the constant $C$ for the initial condition $y_2(0)=y_3(0)=0$ (see Figure \ref{fig:4}). On the other hand, for the initial condition $y_3(0)=y_4(0)=0$, the estimates on $C$ are at least qualitatively correct (see Figure \ref{fig:5}). 
\end{remark}

\begin{remark}
Based on our analysis and simulations, we believe the converse of Proposition \ref{prop1} will hold true as well. In particular, based on the dynamics, if $y_3(0)\neq 0$ and for certain initial conditions, if either $y_2(0)\neq 0$ or $y_4(0)\neq 0$, then it cannot blow up in finite time. Unfortunately, we have yet to prove this explicitly; hence we state this result as a conjecture below.
\end{remark}

\begin{conjecture}\label{prop2}
If conditions (\ref{itm:1}) or (\ref{itm:2}) of Proposition \ref{prop1} does not hold, then ${\bf y}(t)$ is nonexplosive. 
\end{conjecture}

\section{Nonexplosivity and ergodicity of the $\mathbb{C}^2$-valued SDEs} \label{sec:ergo}

In this section we make rigorous the heuristics stated towards the end of \S\ref{sec:reduce}, and prove Theorem \ref{thm:main} and Proposition \ref{prop:main}. Recall that our objective is to add a complex-valued Brownian noise to stabilize the deterministic system (\ref{eq:1}). For the proofs, we will assume that the Brownian noise is of the form $(\sigma \,B_t, \frac{\beta}{\alpha}\sigma \,B_t)$ in the $(z,w)$-coordinates, where $\sigma\in \mathbb{R}\setminus \{0\}$ and $B_t$ is a complex-valued standard Brownian motion. (Note that $B_t$ is the same Brownian motion in both coordinates.) The corresponding SDE is (\ref{eq:MainSDE}). A direct calculation shows that (\ref{eq:MainSDE}) can be rewritten as
\begin{align}
\label{y1y3}
\left\{\begin{array}{l}
  d y_1(t)=(-\nu y_1(t)+\beta\left(y_1^2(t)-y_3^2(t)\right)-\beta\left(y_2^2(0)-y_4^2(0)\right)e^{-2\nu t})\, dt + \sigma\, dB_t^{(1)}\\
                 d y_3(t)=(-\nu y_3(t)+2\beta y_1(t)y_3(t)-2\beta y_2(0)y_4(0)e^{-2\nu t})\, dt + \sigma\,dB_t^{(2)}
 \end{array}\right.
\end{align}
in the `reduced' coordinates ${\bf y}(t)=(y_1(t), y_2(t), y_3(t), y_4(t))$ defined in (\ref{eq:coordchange}). Here $B_t^{(1)}$ and $B_t^{(2)}$ are independent \emph{real}-valued standard Brownian motions, and $B_t = B_t^{(1)}+ i B_t^{(2)}$. Furthermore, if we define $\tilde{z}_t= y_1(t) + iy_3(t)$ and $\tilde{w}_0= y_2(0)+iy_4(0)$, then $\tilde{z}_t$ satisfies the SDE
\begin{equation}\label{eq:ztilde}
	d\tilde{z}_t=(-\nu \tilde{z}_t+\beta\tilde{z}_t^2-\beta \tilde{w}_0^2e^{-2\nu t})\,dt+\sigma\,dB_t.
\end{equation}
Observe that, without the term of order $e^{-2\nu t}$, if $\nu=0$, then (\ref{eq:ztilde}) is a special case of the $\mathbb{C}$-valued SDE with polynomial drift (\ref{eq:poly}), studied in \cite{noise1}.

We now state several related results from \cite{noise1} which will be used in the sequel \cite{noise2}. 

\begin{proposition}[\cite{noise1}]
\label{prop:HMresults}
Consider the complex-valued system
\begin{align*}
                \begin{array}{l}
                     dz_t = \left(a_{n+1} z_t^{n+1} + a_n z_t^n + \cdots + a_0 \right)\,dt+ \sigma\,dB_t 
                \end{array}
\end{align*}
with initial condition $z_0 \in \mathbb{C}$, where $n\geq 1$ is an integer, $a_i \in \mathbb{C}$, $a_{n+1}\neq 0$, $\sigma \in \mathbb{R}\setminus \{0\}$, and $B_t = B_t^{(1)} + iB_t^{(2)}$ is a $\mathbb{C}$-valued standard Brownian motion. The following hold for the process $z_t$:
\begin{enumerate}
\item $z_t$ is nonexplosive in the sense of Definition \ref{def:nonexplosive}.
\item $z_t$ possesses an ergodic invariant probability measure $\pi$, which is absolutely continuous with respect to the Lebesgue measure on $\mathbb{C}$.
\end{enumerate}
\end{proposition}
\begin{proof}
The approach of \cite{noise1} is to construct a Lyapunov function pair corresponding to the It\^{o} process $z_t$; see Theorem 5.2 of \cite{noise1} for the precise statement, and Sections 6 and 7 of \cite{noise1} for the proofs. Once this Lyapunov pair is constructed, then one can apply the general results from Section 4 of \cite{noise1} to the process $z_t$. Item (1) is stated in the proof of Lemma 4.4 of \cite{noise1}, and is given in Appendix A of \cite{noise1}. Item (2) is Theorem 4.5(a) of \cite{noise1} applied to $z_t$, as one can verify that the infinitesimal generator $\mathcal{L}$ is uniformly elliptic. 
\end{proof}

We recall that a process $z_t$ is \emph{ergodic} with respect to the measure $\pi$ if for all $f\in L^1(\pi)$,
\begin{align}
\label{eq:ergo}
P\left(\lim_{t\to\infty} \frac{1}{t} \int_0^t\, f(z_s)\,ds = \int_{\mathbb{C}} \, f(z) \,d\pi(z) \right)=1,
\end{align}
\emph{cf.\@} Chapter 4 of \cite{hasminskii}. Applying Lebesgue's dominated convergence theorem, (\ref{eq:ergo}) implies that for all $z_0 \in\mathbb{C}$ and all bounded functions $f$,
\begin{align}
\lim_{t\to\infty}\frac{1}{t} \int_0^t\, E^P_{z_0}[f(z_s)]\,ds = \int_{\mathbb{C}}\, f(z)\,d\pi(z).
\end{align}
In particular, the invariant measure $\pi$ can be obtained as the limit
\begin{align}
\label{eq:ergo2}
\pi(\cdot) = \lim_{t\to\infty} \frac{1}{t}\int_0^t\, P_{z_0}(z_s\in \cdot)\,ds,
\end{align}
which is independent of $z_0 \in \mathbb{C}$.

It is also of interest to characterize the rate of exponential convergence to the ergodic measure. See Theorem 3.3 of \cite{noise1} for results concerning the process (\ref{eq:poly}). We will not pursue the rate of convergence question for the process $\tilde{z}_t$ in this paper.

In any case, the key step is to justify the connection between (\ref{eq:poly}) and (\ref{eq:ztilde}) so that the ergodic properties of the former can be transferred to the latter. This will be achieved using the Girsanov transform.

\begin{lemma}[Girsanov transform]
\label{lem:timechange}
Let $B_t$ be a $\mathbb{C}$-valued standard Brownian motion on $(\Omega, \mathcal{F}, \mathcal{F}_t, P)$, and $z_t$ and $\tilde{z}_t$ be It\^o processes of respective forms
\begin{align}
\label{eq:z} 
dz_t = \left(-\nu z_t + \beta z_t^2\right)\,dt+ \sigma \,dB_t,\\
d\tilde{z}_t =(-\nu \tilde{z}_t+\beta\tilde{z}_t^2-\beta \tilde{w}_0^2e^{-2\nu t})\,dt+\sigma\,dB_t,
\end{align}
both of which have the same initial condition $z_0=\tilde{z}_0 \in \mathbb{C}$. For each $t\in (0,\infty)$, let
\begin{align}
\label{theta}  \theta(t) &= -\frac{\beta \tilde{w}_0^2}{\sigma} e^{-2\nu t},\\
M_t &= \exp\left(- \int_0^t\, {\rm Re}[\theta(s)] \,dB_s^{(1)} - \int_0^t\, {\rm Im}[\theta(s)]\,dB_s^{(2)}- \frac{1}{2} \int_0^t\,  |\theta(s)|^2\,ds \right),\\
dQ_t &= M_t \,dP \quad \text{on}~\mathcal{F}_t,\\
\widehat{B}_t &= \int_0^t \, \theta(s)\,ds+ B_t.
\end{align}
Then:
\begin{enumerate}
\item $\{M_t: t\geq 0\}$ is a uniformly integrable martingale.
\item There exists a probability measure $Q$ on $\mathcal{F}_\infty$ such that $Q|_{\mathcal{F}_t} = Q_t$. Moreover $P$ and $Q$ are equivalent measures.
\item $\widehat{B}_t$ is a $\mathbb{C}$-valued standard Brownian motion under $Q$.
\item The $Q$-law of $\tilde{z}_t$ is the same as the $P$-law of $z_t$ for all $t\in [0,\infty]$.
\end{enumerate} 
\end{lemma}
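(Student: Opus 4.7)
The plan is to treat (1)--(4) in order, with the crucial observation being that $\theta(t)$ is \emph{deterministic} (since $\tilde{w}_0$ is the fixed initial condition), which makes Novikov's condition essentially automatic and lets us invoke the standard Girsanov theorem cleanly.

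For (1), I would first compute
\[
\int_0^\infty |\theta(s)|^2\,ds = \frac{\beta^2 |\tilde{w}_0|^4}{\sigma^2}\int_0^\infty e^{-4\nu s}\,ds = \frac{\beta^2|\tilde{w}_0|^4}{4\nu\sigma^2} < \infty.
\]
Since the integrand in the definition of $M_t$ is bounded deterministically in $s$, Novikov's condition
\[
E^P\!\left[\exp\!\left(\tfrac{1}{2}\int_0^t |\theta(s)|^2\,ds\right)\right]<\infty
\]
holds for every $t\in[0,\infty]$, so by the classical criterion (e.g.\ Karatzas--Shreve, Cor.~3.5.13) the exponential $(M_t)$ is a true martingale and, because the bound is uniform in $t$, is uniformly integrable. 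In particular $M_t \to M_\infty$ in $L^1(P)$ with $E^P[M_\infty]=1$ and $M_\infty>0$ $P$-a.s.

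For (2), the family $\{Q_t\}$ is consistent because $(M_t)$ is a $P$-martingale, so $Q(A):=E^P[M_\infty \mathbf{1}_A]$ defines a probability measure on $\mathcal{F}_\infty$ with $Q|_{\mathcal{F}_t}=Q_t$; equivalence $P\sim Q$ follows from $M_\infty>0$ a.s.\ together with the integrability of $1/M_\infty$ implied by the analogous Novikov-type bound for $-\theta$. For (3), I would apply the Girsanov theorem to the two-dimensional real Brownian motion $(B^{(1)}_t,B^{(2)}_t)$ with drift vector $(-\mathrm{Re}\,\theta, -\mathrm{Im}\,\theta)$: this is exactly the drift appearing in the Dol\'eans--Dade exponential defining $M_t$, so under $Q$ the shifted process $\widehat{B}_t = B_t - \int_0^t \theta(s)\,ds$ is a two-dimensional standard Brownian motion, i.e.\ a $\mathbb{C}$-valued standard Brownian motion.

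For (4), I would simply substitute $dB_t = d\widehat{B}_t + \theta(t)\,dt$ into the SDE for $\tilde{z}_t$. Using $\sigma\theta(t) = \beta \tilde{w}_0^2 e^{-2\nu t}$, the drift term $-\beta \tilde{w}_0^2 e^{-2\nu t}$ is exactly cancelled, yielding
\[
d\tilde{z}_t = (-\nu \tilde{z}_t + \beta \tilde{z}_t^2)\,dt + \sigma\,d\widehat{B}_t
\]
under $Q$, with $\tilde{z}_0=z_0$. Since (\ref{eq:z}) admits a unique (strong, hence weak) solution given the Brownian driver and initial data --- this is guaranteed by the Lyapunov/nonexplosion results quoted in Proposition \ref{prop:HMresults} combined with pathwise uniqueness for the locally Lipschitz coefficients on each ball --- the $Q$-law of $\tilde{z}_t$ coincides with the $P$-law of $z_t$ for each $t$, and by Kolmogorov consistency for all $t\in[0,\infty]$.

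The only genuinely delicate point is the uniform integrability assertion in (1); everything else is a direct application of Girsanov plus algebra. I anticipate the main obstacle being the rigorous justification that Novikov holds on $[0,\infty]$ (not just on each finite $[0,t]$), which here is painless precisely because $\theta$ decays exponentially and is non-random, so $\int_0^\infty |\theta(s)|^2\,ds$ is a finite deterministic constant.
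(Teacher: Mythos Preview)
Your proof is correct and follows essentially the same route as the paper: verify Novikov's condition, deduce uniform integrability of $(M_t)$, and then invoke the standard Girsanov machinery (the paper cites Revuz--Yor, you cite Karatzas--Shreve) for items (2)--(4). Your version is in fact slightly cleaner, since you immediately exploit that $\theta$ is deterministic with $\int_0^\infty |\theta(s)|^2\,ds<\infty$, whereas the paper first bounds things on finite intervals and then separately checks uniform integrability via a second-moment estimate and Cauchy--Schwarz; you also spell out the drift-cancellation substitution for (4) explicitly, which the paper leaves implicit in its citation.
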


\begin{proof}
From standard SDE theory, we know that the Girsanov transform from $\tilde{z}_t$ to $z_t$ holds on the time interval $[0,T]$ for some finite $T>0$ if Novikov's condition,
\begin{equation}\label{eq:nov}
E^P\left[\exp\left(\frac{1}{2}\int_0^T\, |\theta(s)|^2 \,ds\right)\right]<\infty,
\end{equation}
is satisfied. 
By (\ref{theta}),
\begin{align*}
E^P\left[\exp\left(\frac{1}{2}\int_0^T\, |\theta(s)|^2 \,ds\right)\right]=E^{P}\left[\exp\left(\frac{1}{2} \beta^2|\tilde{w}_0|^4\int_0^T \,e^{-4\nu s}\,ds\right)\right].
\end{align*}
Since 
\begin{align*}
|\tilde{w}_0|^4 = |y_2(0)+iy_4(0)|^4 =[(y_2(0))^2+(y_4(0))^2]^2 \leq 2 [(y_2(0))^4 + (y_4(0))^4],
\end{align*}
we get
\begin{align*}
\frac{1}{2}\beta^2|\tilde{w}_0|^4\int_0^T\,e^{-4\nu s}\,ds &\leq \beta^2 [(y_2(0))^4 + (y_4(0))^4] \int_0^T\,e^{-4\nu s}\,ds\\
&= \beta^2\left[(y_2(0))^4+(y_4(0))^4\right]\frac{1-e^{-4\nu T}}{4\nu} < \infty.
\end{align*}
This verifies Novikov's condition (\ref{eq:nov}).

In order to extend the Girsanov transform to $T=\infty$, we need to verify that the martingale $\{M_t: t\geq 0\}$ is uniformly integrable, \emph{cf.\@} Item (1) of the Lemma. By the preceding calculation, we see that there exists a finite constant $C$ (taken to be $\frac{\beta^2}{4\nu}[(y_2(0))^4 + (y_4(0))^4]$) such that for all $t>0$, the first moment of $M_t$ satisfies
\begin{align}
\label{eq:UI1}
E^P[M_t] = E^P\left[ \exp\left(\frac{1}{2}\int_0^t\, |\theta(s)|^2 \,ds\right)\right] \leq C.
\end{align}
Meanwhile, the second moment of $M_t$ satisfies
\begin{align*}
E^P[M_t^2] &= E^P\left[\exp\left( - \int_0^t\, 2{\rm Re}[\theta(s)] \,dB_s^{(1)} - \int_0^t\, 2{\rm Im}[\theta(s)]\,dB_s^{(2)}- \int_0^t\,  |\theta(s)|^2\,ds\right)\right]\\
&= \exp\left( \int_0^t\, 2 |\theta(s)|^2\,ds - \int_0^t\, |\theta(s)|^2\,ds \right) = \exp\left(\int_0^t\, |\theta(s)|^2\,ds \right) \leq C^2.
\end{align*}
Applying the Cauchy-Schwarz inequality, we obtain that for any measurable subset $A$ of $\mathbb{C}$,
\begin{align}
\label{eq:UI2}
E^P [|M_t| \mathbbm{1}_A] \leq \left(E^P[|M_t|^2]\right)^{1/2} \left(E^P[\mathbbm{1}_A]\right)^{1/2} \leq C [P(A)]^{1/2}.
\end{align}
The estimates (\ref{eq:UI1}) and (\ref{eq:UI2}) together imply that $\{M_t\}$ is uniformly integrable. Items (2) through (4) of the Lemma now follow from  Proposition VIII.1.1, Proposition VIII.1.1', and Theorem VIII.1.4 of \cite{RevuzYor} (see also Proposition VIII.1.15 of \cite{RevuzYor} for the statement of Novikov's condition on the time interval $[0,\infty]$).
\end{proof}

\begin{proposition}
\label{prop:nonexplosive}
$\tilde{z}_t$ is nonexplosive in the sense of Definition \ref{def:nonexplosive}.
\end{proposition}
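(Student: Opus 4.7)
The plan is to piggy-back on the Girsanov transform just established in Lemma \ref{lem:timechange} and transfer the nonexplosivity of $z_t$ (Proposition \ref{prop:HMresults}) to $\tilde{z}_t$. The key structural observation is that, while $\tilde{z}_t$ carries the extra time-dependent drift $-\beta\tilde{w}_0^2 e^{-2\nu t}$ that prevents us from directly invoking Herzog--Mattingly, this extra drift is exactly cancelled by the change of Brownian motion $\widehat{B}_t = B_t - \int_0^t \theta(s)\,ds$. Under $Q$, therefore, $\tilde{z}_t$ solves the same SDE that $z_t$ solves under $P$.

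First I would observe, by rewriting the drift of $\tilde{z}_t$ using $dB_t = d\widehat{B}_t + \theta(t)\,dt$, that under $Q$ the process $\tilde{z}_t$ satisfies $d\tilde{z}_t = (-\nu\tilde{z}_t + \beta\tilde{z}_t^2)\,dt + \sigma\,d\widehat{B}_t$ with the same initial condition $\tilde{z}_0 = z_0$. Item (4) of Lemma \ref{lem:timechange} then identifies the $Q$-law of $\tilde{z}_t$ with the $P$-law of $z_t$. Second, let $\xi_n = \inf\{t\geq 0 : |\tilde{z}_t|\geq n\}$ and $\xi = \sup_n \xi_n$ denote the explosion time of $\tilde{z}_t$, which is $\mathcal{F}_\infty$-measurable as a limit of stopping times. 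Proposition \ref{prop:HMresults}(1) gives $P(\xi^z < \infty) = 0$ for the analogous explosion time of $z_t$, so the law identification yields $Q(\xi < \infty) = 0$. Third, I would invoke item (2) of Lemma \ref{lem:timechange}: the measures $P$ and $Q$ are equivalent on $\mathcal{F}_\infty$, so null sets for $Q$ are null sets for $P$, giving $P(\xi < \infty) = 0$, which is the desired nonexplosivity.

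The main thing to be careful about, rather than a genuine obstacle, is the extension of Girsanov to the time horizon $[0,\infty]$: the equivalence $P \sim Q$ must hold on $\mathcal{F}_\infty$ (not merely on each $\mathcal{F}_t$ for finite $t$) in order to transfer a null event that involves all times. This is exactly why Lemma \ref{lem:timechange} needed the uniform integrability of $\{M_t\}$, which was already established via the bound on $\theta$. Once that is in hand, the present proposition is essentially a one-line consequence.
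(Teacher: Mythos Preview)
Your proposal is correct and follows essentially the same approach as the paper's proof: use item (4) of Lemma \ref{lem:timechange} to identify the $Q$-law of $\tilde{z}_t$ with the $P$-law of $z_t$, invoke Proposition \ref{prop:HMresults}(1) to get $Q_{z_0}(\tilde\xi<\infty)=0$, and then use the equivalence $P\sim Q$ on $\mathcal{F}_\infty$ from item (2) to conclude $P_{z_0}(\tilde\xi<\infty)=0$. Your emphasis on why the Girsanov transform must extend to $\mathcal{F}_\infty$ is a helpful clarification, but the argument itself is the same chain of equivalences the paper uses.
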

\begin{proof}
Let $\xi$ (resp.\@ $\tilde\xi$) be the explosion time of $z_t$ (resp.\@ $\tilde{z}_t$) as defined in Definition \ref{def:explosiontime}. By items (2) and (4) of Lemma \ref{lem:timechange}, we have the equivalence
\begin{align*}
P_{z_0}(\tilde\xi <\infty) =  0  ~\Longleftrightarrow~ Q_{z_0}(\tilde\xi <\infty)=0  ~\Longleftrightarrow~ P_{z_0}(\xi<\infty)=0.
\end{align*}
Since $P_{z_0}(\xi <\infty)=0$ for all $z_0 \in \mathbb{C}$ by Item (1) of Proposition \ref{prop:HMresults}, we deduce that $P_{z_0}(\tilde\xi <\infty)=0$ for all $z_0 \in \mathbb{C}$. This proves the nonexplosivity of (\ref{eq:ztilde}). 
\end{proof}

The ensuing computation allows us to identify the limiting distribution of the process $\tilde{z}_t$.

\begin{lemma}
\label{lem:uniqueinv}
Suppose (\ref{eq:ergo2}) holds. Then for each $z_0\in \mathbb{C}$,
\begin{align}
\label{equallaw}
\lim_{t\to\infty} \frac{1}{t} \int_0^t \, P_{z_0}(\tilde{z}_s \in \cdot)\,ds = \pi(\cdot),
\end{align}
where $\pi$ is as in (\ref{eq:ergo2}).
\end{lemma}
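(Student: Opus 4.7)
The plan is to combine the pathwise ergodic theorem for $z_t$ furnished by Proposition \ref{prop:HMresults}(2) with the Girsanov change of measure in Lemma \ref{lem:timechange}. The mechanism will be to transfer the pathwise Cesaro-limit statement for $z$ under $P$ to the same statement for $\tilde z$ under $Q$ using the identification of laws (Item (4)), then promote it from $Q$-a.s.\ to $P$-a.s.\ via the equivalence of measures (Item (2)), and finally integrate using bounded convergence.

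Concretely, the first step is to apply (\ref{eq:ergo}) with $f=\mathbbm{1}_A \in L^1(\pi)$ for a fixed Borel set $A\subset \mathbb{C}$: this yields $\frac{1}{t}\int_0^t \mathbbm{1}_A(z_s)\,ds \to \pi(A)$, $P$-almost surely. The next step is to observe that under $Q$ the process $\tilde z_t$ satisfies the same SDE driven by the $Q$-Brownian motion $\widehat B_t$ that $z_t$ satisfies under $P$ driven by $B_t$, with identical initial data $z_0$; by weak uniqueness for the locally Lipschitz SDE (\ref{eq:z}), Item (4) of Lemma \ref{lem:timechange} upgrades to an equality of \emph{path-space} laws. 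Consequently the event
\begin{equation*}
\Bigl\{\omega \in \Omega : \lim_{t\to\infty}\frac{1}{t}\int_0^t \mathbbm{1}_A(\tilde z_s(\omega))\,ds = \pi(A)\Bigr\}
\end{equation*}
has $Q$-probability $1$.

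Next, Item (2) of Lemma \ref{lem:timechange} gives $P\sim Q$, which forces the event above to have $P$-probability $1$ as well. Since the running average is uniformly bounded by $1$, bounded convergence together with Fubini then yields
\begin{equation*}
\frac{1}{t}\int_0^t P_{z_0}(\tilde z_s \in A)\,ds \;=\; E^P\Bigl[\frac{1}{t}\int_0^t \mathbbm{1}_A(\tilde z_s)\,ds\Bigr] \;\xrightarrow[t\to\infty]{}\; \pi(A),
\end{equation*}
which is exactly (\ref{equallaw}).

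The step deserving the most care is the passage from Item (4) of Lemma \ref{lem:timechange}, phrased as equality of one-time marginals ``for all $t\in[0,\infty]$'', to equality of \emph{path-space} laws; this upgrade is what permits transporting a trajectorial ergodic statement from $z$ under $P$ to $\tilde z$ under $Q$. This is where weak uniqueness for the SDE (\ref{eq:z}), together with Item (3) of Lemma \ref{lem:timechange} guaranteeing that $\widehat B$ is a genuine $Q$-Brownian motion, does the real work. Once that is in hand, the equivalence $P\sim Q$ mechanically converts the $Q$-a.s.\ statement into a $P$-a.s.\ one, and bounded convergence turns pathwise convergence of the running averages into convergence of the averaged marginals.
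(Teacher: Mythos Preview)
Your argument is correct, but it proceeds quite differently from the paper's proof. The paper gives a direct computation: it writes $\frac{1}{t}\int_0^t P_{z_0}(\tilde z_s\in\cdot)\,ds$ via the Girsanov density $M_s^{-1}$, splits the time integral at an intermediate time $r$, and decomposes into three pieces $I_1+I_2+I_3$. It then shows $I_1\to 0$ trivially, $I_2\to\pi(\cdot)$ using the Markov property at time $r$ together with (\ref{eq:ergo2}), and $I_3\to 0$ by explicit second-moment bounds on $M_r^{-1}$ and on the remainder $R(r,s)$, taking $t\to\infty$ and then $r\to\infty$.

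Your route is cleaner: you invoke the pathwise ergodic theorem (\ref{eq:ergo}) for $z$ under $P$, push it to $\tilde z$ under $Q$ via equality of path laws, promote $Q$-a.s.\ to $P$-a.s.\ by $P\sim Q$, and then average with bounded convergence. This is exactly the mechanism the paper itself deploys \emph{later}, in the proof of Theorem~\ref{thm:main}, to obtain the almost-sure statement (\ref{eq:ztildeergodic}); you have simply noticed that this mechanism already yields (\ref{equallaw}) as an immediate corollary, making the $I_1,I_2,I_3$ computation unnecessary for the lemma as stated. You are also right to flag that Item~(4) of Lemma~\ref{lem:timechange} is phrased for one-time marginals and must be upgraded to path-space laws; your justification via weak uniqueness of solutions to (\ref{eq:z}) driven by the $Q$-Brownian motion $\widehat B$ is the standard and correct one. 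One small caveat: your argument uses the almost-sure ergodic theorem (\ref{eq:ergo}), whereas the lemma's hypothesis is only the averaged version (\ref{eq:ergo2}). In the paper's setting both are available from Proposition~\ref{prop:HMresults}, so this is harmless, but the paper's computational proof has the virtue of working from the weaker hypothesis actually stated.
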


\begin{proof}
To begin, we fix $r\in (0,t)$ and use the Girsanov transform to write
\begin{align}
\label{eq:avg}
\frac{1}{t} \int_0^t\, P_{z_0}(\tilde{z}_s\in \cdot)\,ds 
&= \frac{1}{t} \int_0^t\, E^Q_{z_0}\left[\mathds{1}_{\{\tilde{z}_s\in \cdot\}} M_s^{-1}\right]\,ds\\
\nonumber &= \frac{1}{t} \int_0^r \, E^Q_{z_0}\left[\mathds{1}_{\{\tilde{z}_s\in \cdot\}} M_s^{-1}\right]\,ds + \frac{1}{t} \int_r^t \, E^Q_{z_0}\left[\mathds{1}_{\{\tilde{z}_s\in \cdot\}} M_s^{-1}\right]\,ds\\
\nonumber &=  \frac{1}{t} \int_0^r \, E^Q_{z_0}\left[\mathds{1}_{\{\tilde{z}_s\in \cdot\}} M_s^{-1}\right]\,ds +\frac{1}{t} \int_r^t \, E^Q_{z_0}\left[\mathds{1}_{\{\tilde{z}_s\in \cdot\}} M_r^{-1}\right] \,ds \\ 
\nonumber & \quad+ \frac{1}{t} \int_r^t \, E^Q_{z_0}\left[\mathds{1}_{\{\tilde{z}_s\in \cdot\}} M_r^{-1} (R(r,s)-1)\right]\,ds,
\end{align}
where
\begin{align*}
M_s^{-1} &:= \exp\left(\int_0^s\, {\rm Re}[\theta(\xi)] \,dB_\xi^{(1)} + \int_0^s\, {\rm Im}[\theta(\xi)]\,dB_\xi^{(2)} - \frac{1}{2}\int_0^s\, |\theta(\xi)|^2 \,d \xi  \right),\\
R(r,s) &:= \exp\left(\int_r^s \,{\rm Re}[\theta(\xi)]\, dB_\xi^{(1)} + \int_r^s \,{\rm Im}[\theta(\xi)]\,dB_\xi^{(2)} - \frac{1}{2}\int_r^s\, |\theta(\xi)|^2\,d\xi\right).
\end{align*}
We denote the three integrals in the RHS of (\ref{eq:avg}) by $I_1$, $I_2$, and $I_3$, respectively. To complete the proof, we will show that in the limit $t\to\infty$ followed by $r\to\infty$, $I_1 \to 0$, $I_2 \to \pi(\cdot)$, and $I_3\to 0$.

First of all, using the fact that $M_s^{-1}$ is a mean-$1$ martingale, we get
\begin{align*}
\varlimsup_{t\to\infty} |I_1| \leq \varlimsup_{t\to\infty} \frac{1}{t} \int_0^r \, E^Q_{z_0}[M_s^{-1}]\,ds =\varlimsup_{t\to\infty} \frac{1}{t}\int_0^r \,1\,ds =\varlimsup_{t\to\infty} \frac{r}{t}=0.
\end{align*}
Next, using the Markov property of $\tilde{z}_t$, a change of variables, and Tonelli's theorem, we can write
\begin{align*}
I_2 &= \frac{1}{t} \int_r^t \, E^Q_{z_0}\left[ E^Q_{\tilde{z}_r}\left[ \mathds{1}_{\{\tilde{z}_{s-r}\in \cdot\}}\right] M_r^{-1}\right]\,ds \\
&= \frac{t-r}{t} \cdot \frac{1}{t-r} \int_0^{t-r} E^Q_{z_0} \left[ Q_{\tilde{z}_r}(\tilde{z}_s\in \cdot) M_r^{-1}\right]\,ds\\
&= \frac{t-r}{t} \cdot E^Q_{z_0}\left[\left(\frac{1}{t-r}\int_0^{t-r} Q_{\tilde{z}_r}(\tilde{z}_s\in \cdot)\,ds \right) M_r^{-1}\right].
\end{align*}
Recall that the $Q$-law of $\tilde{z}_t$ is equal to the $P$-law of $z_t$, and the definition of $\pi$ in (\ref{eq:ergo2}). By Reverse Fatou's lemma,
\begin{align*}
\varlimsup_{t\to\infty} I_2 &\leq \left(\varlimsup_{t\to\infty} \frac{t-r}{t}\right) \cdot E^Q_{z_0} \left[\varlimsup_{t\to\infty} \left(\frac{1}{t-r}\int_0^{t-r} Q_{\tilde{z}_r}(\tilde{z}_s\in \cdot)\,ds \right) M_r^{-1} \right]= E^Q_{z_0} \left[\pi(\cdot) M_r^{-1}\right].
\end{align*}
Similarly, by Fatou's lemma,
\begin{align*}
\varliminf_{t\to\infty} I_2 \geq E^Q_{z_0}\left[\pi(\cdot) M_r^{-1}\right]. 
\end{align*}
Since $M_r^{-1}$ is a uniformly integrable martingale, it follows that
\begin{align*}
\lim_{r\to\infty} \lim_{t\to\infty} I_2 = \pi(\cdot) E^Q_{z_0}[M_\infty^{-1}] = \pi(\cdot).
\end{align*}

Finally, for $I_3$ we apply the Cauchy-Schwarz inequality twice, first with respect to the $Q$-expectation and then with respect to the $s$-integral, to find
\begin{align*}
|I_3| &\leq \frac{t-r}{t} \cdot \frac{1}{t-r} \int_r^t\, \left(E^Q_{z_0}[M_r^{-2}]\right)^{1/2} \left( E^Q_{z_0}|R(r,s)-1|^2\right)^{1/2}\,ds \\
&\leq \frac{t-r}{t}\cdot (E^Q_{z_0}[M_r^{-2}])^{1/2} \cdot \left( \frac{1}{t-r}\int_r^t\, E^Q_{z_0}|R(r,s)-1|^2\,ds\right)^{1/2}.
\end{align*}
Note that
\begin{align*}
&~\quad E^Q_{z_0}[M_r^{-2} ] \\
&= E^Q_{z_0}\left[\exp\left(\int_0^r\,{\rm Re}[ 2\theta(\xi)]\,dB_\xi^{(1)} +\int_0^r\,{\rm Im}[ 2\theta(\xi)]\,dB_\xi^{(2)} - \frac{1}{2}\int_0^r \, |2\theta(\xi)|^2\,d\xi + \int_0^r |\theta(\xi)|^2\,d\xi\right) \right]\\
&= \exp\left(\int_0^r\, |\theta(\xi)|^2\,d\xi\right),
\end{align*}
and $E_{z_0}^Q\left[M_\infty^{-2}\right] <\infty$ because $\theta \in L^2([0,\infty])$. An analogous calculation gives that $E^Q_{z_0}[R(r,s)]=1$ and 
\begin{align*}
E^Q_{z_0} |R(r,s)-1|^2 = E^Q_{z_0}[R(r,s)]^2 -1
= \exp\left(\int_r^s \,|\theta(\xi)|^2\,d\xi \right)-1.
\end{align*}
Thus 
\begin{align*}
\varlimsup_{t\to\infty}  \frac{1}{t-r}\int_r^t\, E^Q_{z_0}|R(r,s)-1|^2\,ds &= \varlimsup_{t\to\infty} \left[\frac{1}{t-r}\int_r^t \, \exp\left(\int_r^s\, |\theta(\xi)|^2\,d\xi\right)\right]-1\\
&\leq \varlimsup_{t\to\infty} \exp\left(\int_r^t\, |\theta(\xi)|^2\,d\xi\right) -1\\
&= \exp\left(\int_r^\infty\, |\theta(\xi)|^2\,d\xi\right)-1.
\end{align*}
Putting everything together we get
\begin{align*}
\varlimsup_{r\to\infty} \varlimsup_{t\to\infty} |I_3| \leq \varlimsup_{r\to\infty} \left[\exp\left(\int_0^r \,|\theta(\xi)|^2\,d\xi\right)\right]^{1/2}\cdot \varlimsup_{r\to\infty} \left[ \exp\left(\int_r^\infty\, |\theta(\xi)|^2\,d\xi\right)-1\right]^{1/2}=0.
\end{align*}
This proves (\ref{equallaw}).
\end{proof}

\begin{proof}[Proofs of Theorem \ref{thm:main} and Proposition \ref{prop:main}]
Since the nonexplosivity of $\tilde{z}_t$ is proved in Proposition \ref{prop:nonexplosive}, we concentrate on the ergodic theorem. We already showed in Lemma \ref{lem:uniqueinv} that, under $P$, the dynamics of $\tilde{z}_t$ converges to the measure $\pi$. We now strengthen this convergence to the $P$-a.s.\@ sense: for every $f\in L^1(\pi)$,
\begin{align}
\label{eq:ztildeergodic}
P\left(\lim_{t\to\infty} \frac{1}{t} \int_0^t\, f(\tilde{z}_s)\,ds = \int_{\mathbb{C}} \, f(\tilde{z}) \,d\pi(\tilde{z})\right) =1.
\end{align}
Our approach here is to exploit the equivalence of the probability measures $P$ and $Q$ on $\mathcal{F}_\infty$ [Item (3) of Lemma \ref{lem:timechange}], as well as the equivalence of the $Q$-law of $\tilde{z}_t$ and the $P$-law of $z_t$ [Item (4) of Lemma \ref{lem:timechange}]. Using these two observations, we have that for every Borel measurable subset $A$ of $\mathbb{R}$,
\begin{align*}
P\left(\lim_{t\to\infty} \frac{1}{t} \int_0^t\, f(\tilde{z}_s)\,ds \in A \right) =0  &\Longleftrightarrow Q\left(\lim_{t\to\infty} \frac{1}{t} \int_0^t\, f(\tilde{z}_s)\,ds \in A\right) =0 \\
& \Longleftrightarrow P\left(\lim_{t\to\infty} \frac{1}{t} \int_0^t\, f(z_s)\,ds \in A\right) =0.
\end{align*}
By (\ref{eq:ergo2}), we deduce that
\begin{align*}
P\left(\lim_{t\to\infty} \frac{1}{t} \int_0^t\, f(\tilde{z}_s)\,ds \in A \right) =P\left(\lim_{t\to\infty} \frac{1}{t} \int_0^t\, f(z_s)\,ds \in A\right) =0  
\end{align*}
unless $\int_{\mathbb{C}}\, f(z)\,\pi(dz) \in A$. This implies (\ref{eq:ztildeergodic}). 

Referring back to the notation $\tilde{z}$ and $\tilde{w}$ introduced immediately prior to Proposition \ref{prop:main}, let $\Pi$ be the probability measure on $\mathbb{C}^2$ defined by $\Pi(\tilde{z}, \tilde{w}) = \pi(\tilde{z}) \delta_0(\tilde{w})$, where $\delta_0$ is the delta measure. We are going to show that for all $g\in L^1(\Pi)$,
\begin{align}
\label{ergot1}
P\left(\lim_{t\to\infty} \frac{1}{t} \int_0^t\, g(\tilde{z}_s, \tilde{w}_s) \,ds = \int_{\mathbb{C}^2}\, g(\tilde{z},\tilde{w}) \,d\Pi(\tilde{z}, \tilde{w})\right) =1.
\end{align}
Since the space $C_c(\mathbb{C}^2)$ of continuous functions with compact support is dense in $L^1(\Pi)$, it suffices to prove (\ref{ergot1}) for all $g\in C_c(\mathbb{C}^2)$. Using that $\tilde{w}_s= \tilde{w}_0 e^{-\nu s} \to 0$ as $s\to\infty$, as well as the continuity of $g$, we see that for every $\epsilon>0$, there exists a $\kappa>0$ such that if $\|(\tilde{z}_s,\tilde{w}_s)-(\tilde{z}_s, 0)\| =|\tilde{w}_s|<\kappa$, then $|g(\tilde{z}_s, \tilde{w}_s)-g(\tilde{z}_s,0)|<\epsilon$. Fix an $r>0$ such that $|\tilde{w}_0|e^{-\nu r} \leq \kappa$. Then
\begin{align*}
\frac{1}{t} \left|\int_0^t\, \left(g(\tilde{z}_s, \tilde{w}_s) - g(\tilde{z}_s,0)\right)\,ds\right| &\leq\frac{1}{t} \left[\int_0^r\, |g(\tilde{z}_s, \tilde{w}_s) - g(\tilde{z}_s, 0)|\,ds + \int_r^t\, |g(\tilde{z}_s, \tilde{w}_s) - g(\tilde{z}_s, 0)|\,ds \right]\\
& < \frac{r}{t} \|g\|_\infty + \frac{t-r}{t} \epsilon.
\end{align*}
Taking the limsup as $t\to\infty$ on both sides yields
\begin{align*}
\limsup_{t\to\infty}\frac{1}{t} \left|\int_0^t\, \left(g(\tilde{z}_s, \tilde{w}_s) - g(\tilde{z}_s,0)\right)\,ds\right|<\epsilon.
\end{align*}
Since $\epsilon>0$ is arbitrary, and the limit $\lim_{t\to\infty} \frac{1}{t}\int_0^t\, g(\tilde{z}_s,0)\,ds$ exists $P$-a.s., we deduce that
\begin{align}
\label{ergot2}
\lim_{t\to\infty} \frac{1}{t} \int_0^t\, g(\tilde{z}_s, \tilde{w}_s)\,ds = \lim_{t\to\infty} \frac{1}{t} \int_0^t \, g(\tilde{z}_s, 0)\,ds \qquad P\text{-a.s.}
\end{align}
Meanwhile, by (\ref{eq:ztildeergodic}) and the definition of $\Pi$,
\begin{align}
\label{ergot3}
\lim_{t\to\infty} \frac{1}{t}\, \int_0^t g(\tilde{z}_s, 0)\,ds = \int_{\mathbb{C}}\, g(\tilde{z},0) \,d\pi(\tilde{z}) = \int_{\mathbb{C}^2}\, g(\tilde{z}, \tilde{w})\, d\Pi(\tilde{z},\tilde{w})  \qquad P\text{-a.s.}
\end{align} 
Putting (\ref{ergot2}) and (\ref{ergot3}) together yields (\ref{ergot1}). 

We have thus proved that the system of time-homogeneous SDEs (\ref{eq:MainSDE}) converges to a unique ergodic measure $\Pi$. 
\end{proof}


\section{Numerical Results} \label{sec:numerical}

In this section we provide a numerical perspective for solving our stabilization by noise problem, and expand upon the analysis conducted in previous sections.

\subsection{Nonexplosivity upon stabilization by Brownian noise}\label{sec:grid}
We have shown that a necessary condition for the deterministic system (\ref{eq:1}) to admit finite-time blow-up solutions is when $y_3(t) = 0$ for all $t\geq0$ (Remark \ref{rem:y3always0}). Hence, to stabilize this system, we add a Brownian noise which ensures that $y_3(t)\neq0$ for all $t\geq0$. Our simulations, described below, suggest that it is enough to add a real-valued Brownian noise in the ${\rm Im}(z)$ direction; that is,  the corresponding SDE reads
\begin{equation}\label{eq:ou}
    \left\{
                \begin{array}{ll}
                  dx_1 = (-\nu x_1 + \alpha (x_1x_3-x_2x_4))dt\\
                  dx_2 = (-\nu x_2 + \alpha (x_1x_4+x_2x_3))dt + dB_t\\
                 dx_3 = (-\nu x_3 + \beta (x_1x_3-x_2x_4))dt\\
                 dx_4 = (-\nu x_4 + \beta (x_1x_4+x_2x_3))dt\\
                \end{array}
              \right.
\end{equation}
in the ${\bf x}$ coordinates, or
%
%
\begin{equation}\label{eq:14}
    \left\{
                \begin{array}{ll}
                  dy_1=(-\nu y_1+\beta[(y_1^2-y_2^2)-(y_3^2-y_4^2)])dt \\
                 dy_2=(-\nu y_2)dt\\
                 dy_3=(-\nu y_3+2\beta(y_1y_3-y_2y_4))dt + \frac{1}{2}dB_t\\
                 dy_4=(-\nu y_4)dt + \frac{1}{2}dB_t \\
                \end{array}
              \right.
\end{equation} 
in the ${\bf y}$ coordinates.

Observe that if we complexify the coordinates in (\ref{eq:14}) by taking $\tilde{w}_t=y_2(t)+iy_4(t)$, then $\tilde{w}_t$ satisfies the SDE $d\tilde{w}_t=-\nu\tilde{w}\,dt+\frac{i}{2}\,dB_t$, which is a 2-dimensional Ornstein-Uhlenbeck process. (Compare this against the choice of additive Brownian noise in (\ref{eq:MainSDE}), where $\tilde{w}_t$ satisfies the deterministic equation $d\tilde{w}_t  = -\nu\tilde{w}\,dt$.) Since the Ornstein-Uhlenbeck process is ergodic and has an explicit invariant measure, we believe that the system (\ref{eq:ou}) should be nonexplosive, and may be ergodic. As of this writing, we are not in a position to prove these statements, due to some technicality involved in carrying out a time change similar to the one done in \S\ref{sec:ergo}.

\begin{figure}
\centering
\renewcommand{\arraystretch}{1.2}
\begin{small}
\begin{tabular}[b]{ccc}
 & Without added noise & With added noise \\
\rotatebox{90}{$\qquad \qquad y_2(0)=y_3(0)=0$} &
 \includegraphics[width=0.38\textwidth]
    {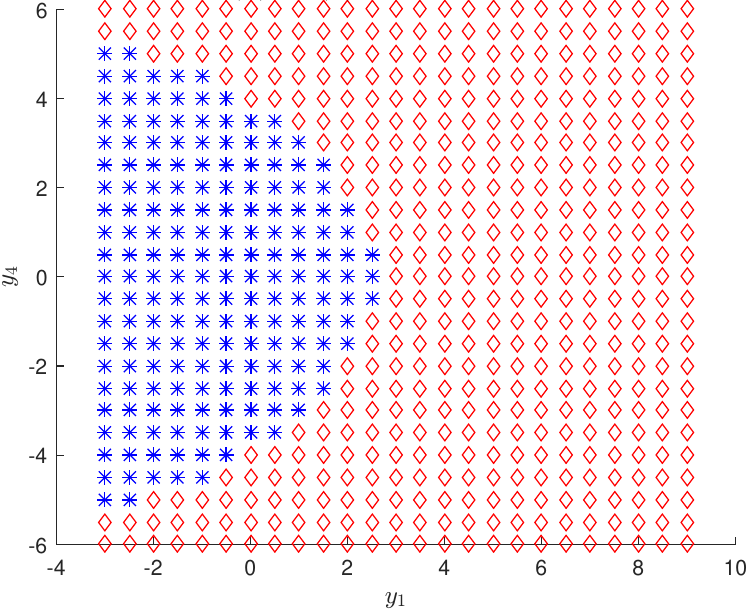}
    &
  \includegraphics[width=0.38\textwidth]
    {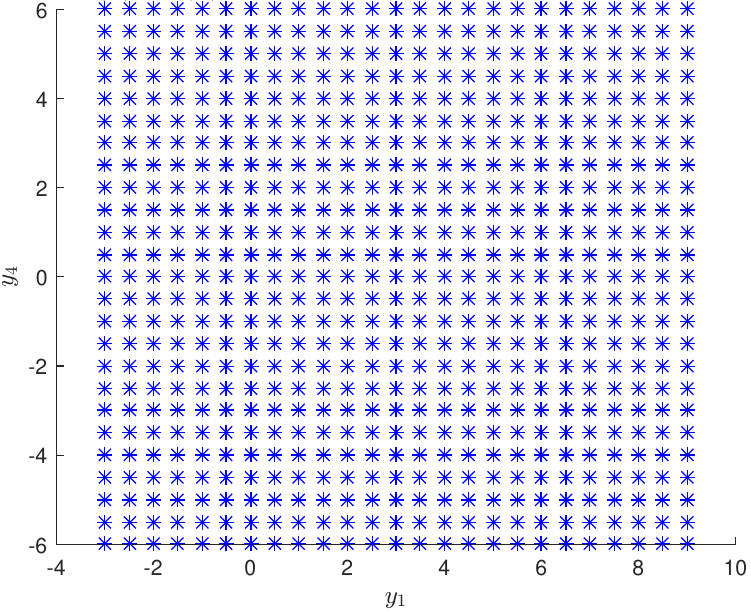}  \\ 
      \rotatebox{90}{$\qquad \qquad y_3(0)=y_4(0)=0$}
    &
   \includegraphics[width=0.38\textwidth]
    {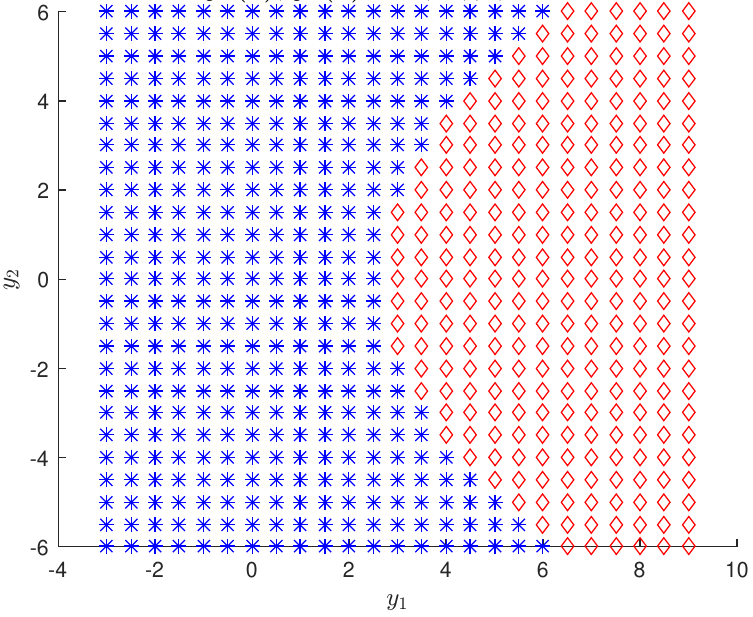}
    &
   \includegraphics[width=0.38\textwidth]
    {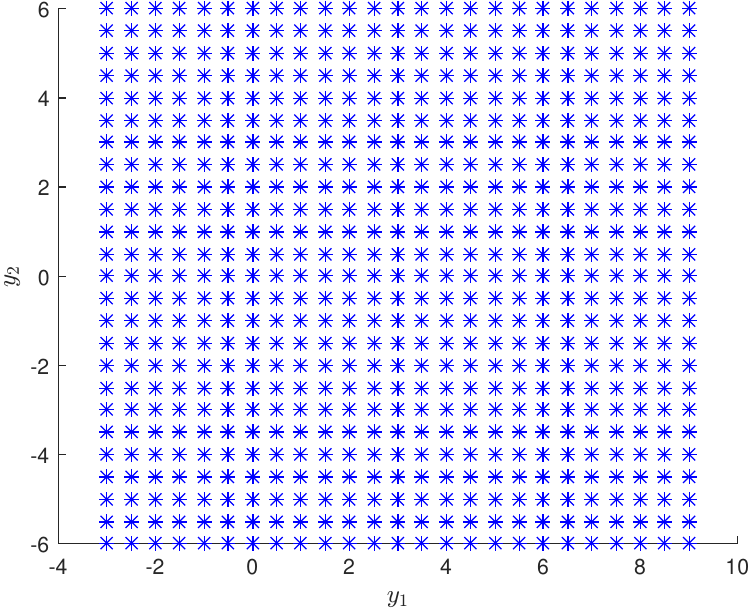} \\
    \rotatebox{90}{ $\qquad \quad y_3(0)=0,~y_4(0)=1$}
    &
  \includegraphics[width=0.38\textwidth]
    {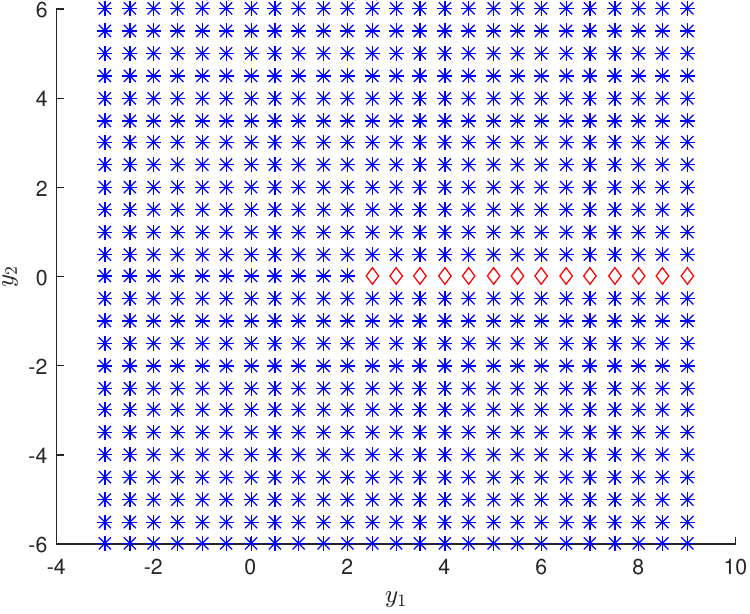}
    &
    \includegraphics[width=0.38\textwidth]
    {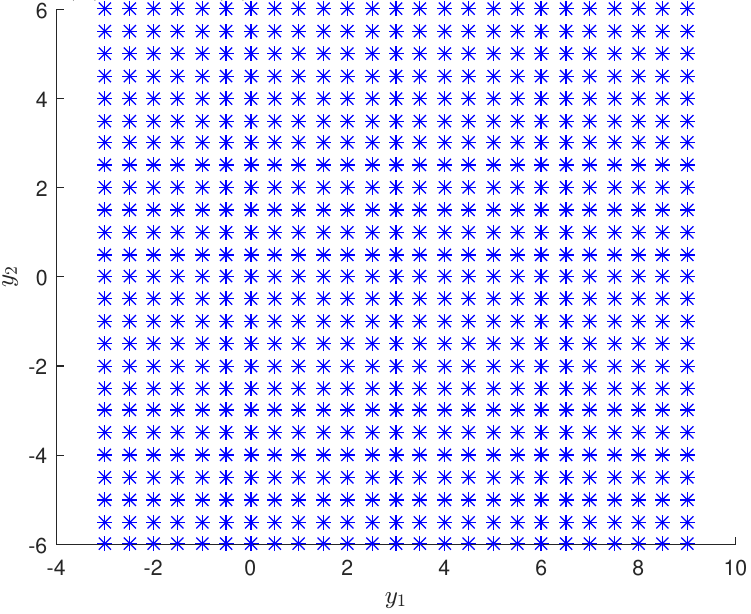}\\
     \rotatebox{90}{$\qquad \qquad y_3(0)=y_4(0)=1$}
    &
   \includegraphics[width=0.38\textwidth]
    {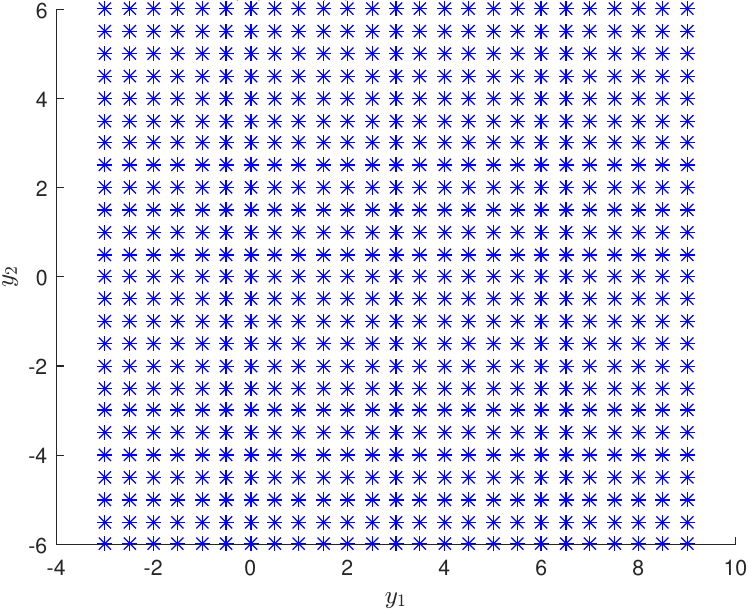} 
    &
    \includegraphics[width=0.38\textwidth]
    {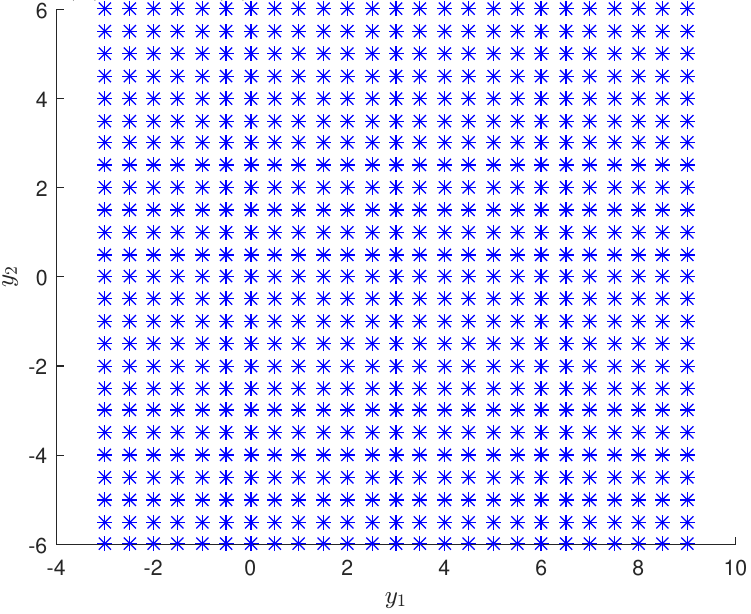}
\end{tabular}
\end{small}
\caption{Simulation results of various initial conditions which give rise to nonexplosive solutions (indicated by {\color{blue} $\ast$}) and explosive solutions (indicated by {\color{red} $\diamondsuit$}) of our $\mathbb{C}^2$-valued coupled system (\ref{eq:1}), with $\beta=\nu=1$. The results on the left panel are for the system without added Brownian noise, while the results on the right panel are with added Brownian noise of the form (\ref{eq:14}).}
\label{fig:sim}
\end{figure}

That said, we have numerical evidence for nonexplosivity of system (\ref{eq:14}). We first simulated the trajectories of the ODE (\ref{eq:5}) (without noise). Using MATLAB, we created a function that takes in a set of initial conditions $y_1(0), y_2(0), y_3(0), y_4(0), \alpha,\beta,$ and $\nu$, and produces a discrete-time approximate solution of (\ref{eq:5}) via Euler's method. We then created a program that fixes two of the initial coordinates (for instance, $y_3(0)=y_4(0)=0$) and $\alpha, \beta$, and $\nu$, while varying the other two coordinates. The program then ran a series of simulations, testing every set of inputs in a grid of the varying initial coordinates. Thus, this program examined a two dimensional grid in the 4-dimensional space of the ODE. For each set of initial conditions, our program recorded whether or not that trajectory blows up in a set period of time. It then recorded the result in a 2-dimensional plot (whose axes are the two varying initial conditions). Our MATLAB code is available for download at \cite{matlab}.

On the left panel in Figure \ref{fig:sim} we present our results of simulating trajectories where $\beta=\nu=1$, and two of the four coordinates are initially fixed, while the other two are being varied. The red diamonds indicate initial conditions which lead to finite-time blow-up trajectories, while blue stars indicate those that give rise to nonexplosive trajectories. Observe that without added noise, our simulations for the fixed initial conditions $y_3(0)=y_4(0)=0$  and $y_2(0)=y_3(0)=0$ correspond, respectively, with the phase portraits in Figures \ref{fig:4} and \ref{fig:5}. Also, for the fixed initial condition $y_3(0)=0$ and $y_4(0)=1$ (without added noise), the explosive trajectory lies on the line $y_2=0$. Contrast this with the fixed initial condition $y_3(0)=y_4(0)=1$, which do not give rise to explosive trajectories. Our numerical simulations verify the analysis in \S\ref{sec:analysis}. 

We used the same procedure to verify our analysis of the SDE (\ref{eq:14}). We ran this program again, this time with our trajectory function programmed with Brownian noise added to $y_3$ and $y_4$. The Brownian noise is modeled by a normally distributed random variable, scaled by the square root of the time step, to each step of the iterated Euler's method. Then we ran the simulations and generate the explosive and nonexplosive initial conditions as before; see the right panel in Figure \ref{fig:sim}. It appears evident that the SDE (\ref{eq:14}) is stable globally. We ran this computation many times on the same set of initial conditions to ensure that the probability of a stable trajectory is near $1$. 

While not shown in Figure \ref{fig:sim}, we have tested a large variety of initial conditions to ensure that SDE (\ref{eq:14}) is stable everywhere in the 4-dimensional space, for all values of $\nu>0$ and $\alpha, \beta\in \mathbb{R}$. 


%

\subsection{Invariant measure}\label{sec:im}

In \S\ref{sec:ergo}, we proved that the system (\ref{eq:ztilde}) has a unique invariant measure. However, characterizing this invariant measure analytically is challenging, so we take a numerical approach here.

Consider the SDEs
\begin{equation}\label{eq:sde}
    \left\{
                \begin{array}{ll}
                  dy_1=(-\nu y_1+\beta[(y_1^2-y_2^2)-(y_3^2-y_4^2)])\,dt + \sigma_1 \,dB_t^1\\
                 dy_2=(-\nu y_2)\,dt + \sigma_2 \,dB_t^1\\
                 dy_3=(-\nu y_3+2\beta(y_1y_3-y_2y_4))\,dt + \sigma_3 \,dB_t^2\\
                 dy_4=(-\nu y_4)\,dt +\sigma_4 \,dB_t^2 ,\\
                \end{array}
              \right.
\end{equation}

where \[ \sigma =  \left( \begin{array}{cc}
\sigma_1 & 0 \\
\sigma_2 & 0\\
0 & \sigma_3\\
0 & \sigma_4 \end{array} \right).\]
We will consider only the case $\sigma_2=\sigma_4=0$ that corresponds to adding an isotropic Brownian noise, which was the case analyzed in \S\ref{sec:ergo}.

In \S\ref{sec:ergo}, we proved that the system (\ref{eq:ztilde}) has the same invariant measure as the system (\ref{eq:z}). To compute for the invariant measure of the system (\ref{eq:ztilde}), we will compute the invariant measure for the system
\begin{equation}\label{eq:2d}
    \left\{
                \begin{array}{ll}
                  dy_1 = (-\nu y_1 + \beta(y_1^2-y_3^2))\,dt + \sigma_1\, dB_t^1\\
                 dy_3 = (-\nu y_3 + 2\beta y_1y_3)\, dt + \sigma_3\, dB_t^2.\\
                \end{array}
              \right.
\end{equation}



To find the invariant measure for (\ref{eq:2d}), we solve the following non-elliptic PDE
\begin{equation}
    \left\{
                \begin{array}{ll}
                 \mathcal{L}^* f = 0\\
		f(y_1,y_3) \rightarrow 0 \quad \text{as} ~||(y_1,y_3)|| \rightarrow +\infty ,
                \end{array}
              \right.
\end{equation} 
where $f=\frac{d\pi}{d\lambda}$, $\lambda$ is the $2-$dimensional Lebesgue measure, and the $\mathcal{L}^*$, the adjoint of $\mathcal{L}$, is given in this case by
 $$\mathcal{L}^*=-\partial_{y_1}((-\nu y_1 + \beta y_1^2-\beta y_3^2)(\cdot))-\partial_{y_3}((-\nu y_3 + 2 \beta y_1 y_3)(\cdot)) + \frac{1}{2}(\sigma_1^2\partial_{y_1y_1}+\sigma_3^2\partial_{y_3y_3}).$$
This gives the steady-state solution to the forward Kolmogorov (or Fokker-Planck) equation associated with the SDE (\ref{eq:2d}). We employ the MATLAB PDE Toolbox to solve this PDE using the finite-element method. We approximate the solutions by solving
\begin{equation}
    \left\{
                \begin{array}{ll}
                 \mathcal{L}^* f = 0 &\quad \text{in}~B_4(0)\\
		f(y_1,y_3) = .1 &\quad \text{on} ~\partial B_4(0) 
                \end{array}
              \right.,
\end{equation} 
where $B_4(0)$ is the ball of radius 4 centered at the origin. The size of the boundary conditions and radius of the ball are mostly irrelevant. Altering them would roughly be equivalent to rescaling the units of the resulting measure. 

After a number of mesh refinements, we obtain an approximate density plot for the invariant measure for (\ref{eq:2d}), see Figure \ref{fig:iso}. Observe that the invariant measure has a peak which is symmetric about $y_3=0$, and slightly skewed toward the left half plane ($y_1<0$). Also, the measure appears to have a heavy-tailed distribution (higher moments may be infinite), which is consistent with the result of Herzog and Mattingly \cite{noise1} in their analysis of (\ref{eq:poly}).

\begin{figure}
\centering
\includegraphics[width=0.45\textwidth]{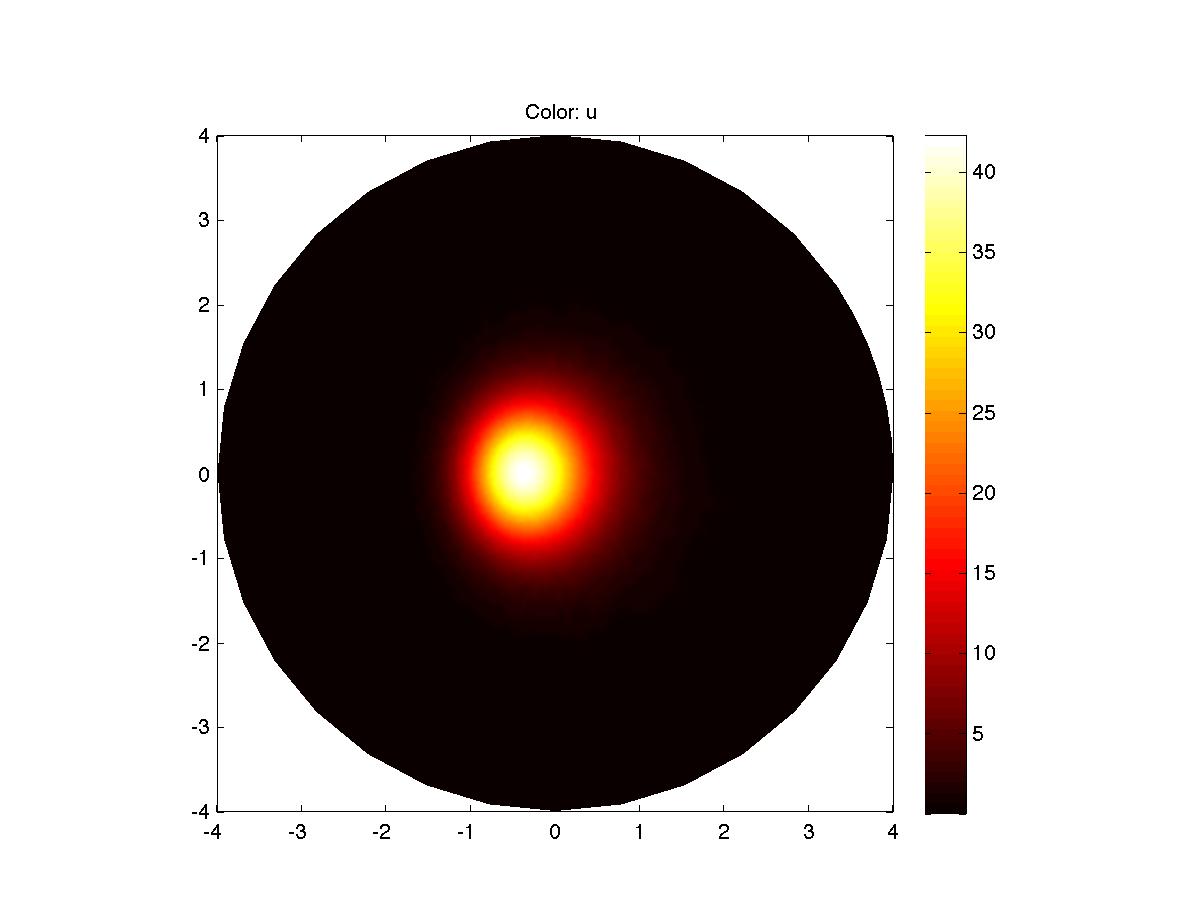}
\caption{Intensity plot for the invariant measure for $\beta=\nu=1$.}
\label{fig:iso}
\end{figure}


\section{Conclusion} \label{sec:conclusion}
Extending existing research on the stabilization of $\mathbb{C}$-valued polynomial ODEs \cite{2011,noise1}, we have ascertained that the addition of a Brownian noise to our prototype multivariable system of ODEs stabilizes explosive (and thus all) trajectories with probability one. This may be seen as a first step toward understanding higher-dimensional stochastic Burgers' equations \cite{transition}, as well as higher-dimensional analogs of complex Langevin equations studied by Aarts \emph{et al.} \cite{Aarts1, Aarts2}.

While we have analytically and numerically verified conditions for stabilization of our coupled ODEs, there remain many open questions. How would our results differ if we change our ODEs in any of the following manners: (1) make the drift parameter $\nu$ negative; (2) if the drift parameters for the two complex coordinates $\nu_1$ and $\nu_2$ are distinct (in which case it may be difficult to find a coordinate transformation like the one used in this paper)? Additionally, we would like to go beyond our system and consider the stabilization problem in more general nonlinear systems. For instance, would our methods still apply to systems in higher dimensions, say in $\mathbb{C}^n$, $n\geq 3$? What about coupled ODEs wherein the coupling terms are higher than quadratic order?

\bibliographystyle{amsalpha}
\bibliography{ref}

\end{document}